\title[Representations of $3$-manifolds groups in $\PGL(n,\C)$]{Representations of $3$-manifolds groups in $\PGL(n,\C)$ and their restriction to the boundary}
\author{Antonin Guilloux} 
\address{Institut de Math\'ematiques de Jussieu \\
Unit\'e Mixte de Recherche 7586 du CNRS \\
Universit\'e Pierre et Marie Curie \\
4, place Jussieu 75252 Paris Cedex 05, France \\}
\email{aguillou@math.jussieu.fr}
\urladdr{http://people.math.jussieu.fr/~aguillou}
\begin{document}

\renewcommand{\H}{\mathbf{H}}
\newcommand{\Z}{\mathbf{Z}}
\newcommand{\R}{\mathbf{R}}
\newcommand{\C}{\mathbf{C}}
\newcommand{\PGL}{\mathrm{PGL}}
\newcommand{\SL}{\mathrm{SL}}
\newcommand{\Hom}{\mathrm{Hom}}
\newcommand{\periph}{\mathrm{periph}}
\newcommand{\geom}{\mathrm{geom}}
\newcommand{\Hol}{\mathrm{Hol}}
\newcommand\PU{\textrm{PU}}
\newcommand\Fl{\mathcal{F}l}
\newcommand\AFl{\mathcal{AF}l}
\renewcommand\P{\mathbb{P}}

\newtheorem{proposition}{Proposition}

\newtheorem{theorem}{Theorem}
\newtheorem{lemma}{Lemma}
\newtheorem{fact}{Fact}
\newtheorem{cor}{Corollary}

\theoremstyle{definition}
\newtheorem{definition}{Definition}
\newtheorem{remark}{Remark}

\begin{abstract}  
Let $M$ be a cusped $3$-manifold -- e.g. a knot complement -- and note $\partial 
M$ the collection of its peripheral tori. Thurston \cite{thurston-gt3m} gave a 
combinatorial way to produce hyperbolic structures via triangulation and the 
so-called \emph{gluing equations}. This gives coordinates on the space of 
representations 
of $\pi_1(M)$ to $\PGL(2,\C)$.

In their paper \cite{NeumannZagier}, Neumann and Zagier showed how this 
coordinates are adapted to describe this space of representations as a 
lagragian subvariety lying inside a space equipped with a $2$-form -- now 
called Neumann-Zagier symplectic space. And they related this $2$-form with a 
natural symplectic form on the space of representations of $\pi(\partial M)$ 
to $\PGL(2,\C)$: the Weil-Petersson form.

Subsequent works of Neumann \cite{Neumann} and Kabaya \cite{kabaya} extended 
the scope of the previous works. And, more recently, there has been 
generalizations of this strategy for representations to $\PGL(3,\C)$ \cite{BFG} 
or $\PGL(n,\C)$ \cite{GaroufalidisThurstonZickert, GaroufalidisGoernerZickert, DGG}. 
Unfortunately, in the $\PGL(n,\C)$-case, the program of Neumann-Zagier has not 
been fulfilled: indeed the second part (the link with the Weil-Petersson form) 
was not achieved. At the very end of the process of writing this paper, a paper of Garoufalidis and Zickert appeared on the ArXiv \cite{GZ}. Their result are very similar, though the point of view is slightly different. 

We exhibit in this note such a symplectic morphism. It is a direct 
generalization of the work \cite{BFG}, with the key input of the 
parametrization given in \cite{DGG}.
\end{abstract}

\maketitle

\section{Introduction}

Let $M$ be the $8$-knot complement. Thurston \cite{thurston-gt3m} explained the following program to construct 
its hyperbolic structure:
\begin{enumerate}
\item Triangulate $M$, here thanks to the Riley's triangulation.
\item Give a set of parameters to each tetrahedra, here cross-ratios, that 
describe their hyperbolic structure.
\item Glue back the tetrahedra, imposing the \emph{gluing equations}. Those 
insure that the edge will not become singular.
\item Add a polynomial condition specifying that the structure is complete, by 
forcing the peripheral holonomy to be parabolic.
\end{enumerate}
Hence the hyperbolic structure is described by the solution to a polynomial 
system. Moreover, relaxing the last condition, this parametrize a (Zariski-)open subset of a decorated 
version of the character variety:
$$\chi_2(M):=\Hom(\pi_1(M),\PGL(2,\C))//\PGL(2,\C).$$
This approach has proven very efficient and is followed in the computer program SnapPy to construct 
hyperbolic structures on ideally triangulated $3$-manifolds.

This program was further developed by Neumann and Zagier in 
\cite{NeumannZagier}. By a careful analysis of items 2 and 3, they showed that 
there is a $\C$-vector space (denoted $\ker(\beta^*)\subset J$ in \cite{Neumann}) carrying an 
antisymmetric bilinear form $\omega$ such that 
\begin{itemize}
\item the character $\chi_2(M)$, through the parameters, is seen as a subvariety of 
$\exp(\ker(\beta^*))$ tangent to the kernel of the $2$-form $\omega$\footnote{More precisely, it is the decorated character variety.}.
\item the symplectic quotient $\mathcal H(J)$ of $\ker(\beta^*)$ (the so-called Neumann-Zagier symplectic  
space) is 
isomorphic to the cohomology group $H^1(\partial M,\C)$ with its Goldman-Weil-
Peterson symplectic form ($\partial M$ denotes the peripheral torus).
\end{itemize}
This presentation uses the more precise version given by Neumann 
\cite{Neumann}. This construction allows to understand the volume of 
the representations near the holonomy of the hyperbolic structure 
\cite{NeumannZagier}. It has been used to give a proof of the local 
rigidity of the holonomy of the hyperbolic structure \cite{Choi}. Kabaya \cite{kabaya}
investigated the case of $M$ being a compact hyperbolic manifold with higher genus boundary.

More recently, several new works revisited Neumann-Zagier strategy and generalized it to understand the character variety:
$$\chi_n(M):=\Hom(\pi_1(M),\PGL(n,\C))//\PGL(n,\C).$$
The reasons of this new interest seems to emanate from two very different fields. First,
from a geometric point of view: the construction of representations $\pi_1(M)\to \mathrm{PU}(2,1)$,
following the initial strategy of Thurston, has been undertaken by Falbel \cite{falbel} in order to 
investigate the possibility for $M$ to carry a CR-spherical structure. Using Neumann-Zagier approach, 
Bergeron, Falbel and the author \cite{BFG} gave a description of $\chi_3(M)$ similar to the one of $\chi_2
(M)$ described above. This leads to a local rigidity result \cite{BFGKR} and actual computations (for $n=3$) \cite
{FKR}. Those, in turn, leads to construction of geometric structures \cite{deraux-falbel}. Another 
approach is via physical mathematics. I must confess my ignorance and refer to Dimofte and Garoufalidis 
\cite{DimofteGaroufalidis} for a presentation. This motivated the works of Garoufalidis, Goerner, 
Thurston and Zickert \cite{Zickert,GaroufalidisThurstonZickert,GaroufalidisGoernerZickert}. They proposed 
a set of parameters for the case $\PGL(n,\C)$, and generalized partially Neumann-Zagier results for their 
setting. This also leads to actual computations (mainly when $n=3$) by the second named author. Dimofte, 
Gabella and Goncharov \cite{DGG} also analyzed the problem for $\PGL(n,\C)$ from this point of view, 
giving a systematic 
account of a set of coordinates, together with the announcement that they are 
able to fulfill the Neumann-Zagier strategy. Unfortunately all the proofs are not given in their paper. As 
mentioned in the abstract, by the very end of the writing of this paper, 
Garoufalidis and Zickert \cite{GZ} published another version of this work. 
Their result and the one discussed in this paper are very similar. However, in my opinion, from a geometrical viewpoint, the approach here allows a better understanding\footnote{I think that the point raised in their remark 2.12 is answered here.}. As an application of our approach, this gives a variational formula for the volume of a representation, as thoroughly discussed in \cite{DGG}. Here we present another, more geometric, application: we prove the local rigidity result generalizing \cite{Choi,BFGKR}.

This paper links the work of \cite{DGG} with \cite{BFG} to complete Neumann-Zagier program in the case of 
$\PGL(n,\C)$. My feeling is that the coordinates given in 
\cite{DGG} are very well adapted to understand of the "lagrangian part" of the strategy of Neumann-Zagier -- i.e. describe the analog of the vector space $\ker(\beta^*)\subset J$ with its form $\omega$ such that $\chi_n(M)$ 
is 
tangent to its kernel in $\exp(\ker(\beta^*))$ -- and define the volume of those representations. But, in order to 
understand the "symplectic isomorphism part", a 
direct generalization of \cite{BFG} seems suitable. 

After this rather long introduction, let me warn the reader that this paper heavily relies on three 
sources: 
\begin{itemize}
\item Fock and Goncharov combinatorics described in \cite{FG},
\item Dimofte, Gabella and Goncharov work in \cite{DGG},
\item Bergeron, Falbel and G. work in \cite{BFG} (and through it to the original Neumann-Zagier strategy 
\cite{NeumannZagier,Neumann}.
\end{itemize}
Those works are not easily resumed. So I rather choosed to give precise references to them. This makes 
this paper absolutely not self-contained. I plan to write later on a more thorough presentation.

\section{Triangulation, flags, affine flags and their configurations}

\subsection{Triangulated manifold}

We will consider in this paper triangles and tetrahedra. Those will always be oriented: an orientation is 
an ordering of the vertices up to even permutations. Note that the faces of a tetrahedron inherits an 
orientation.

An abstract triangulation is defined as a pair $\mathcal{T}=( (T_{\nu}
)_{\nu = 1 , \ldots , N} , \Phi)$ where $(T_{\nu} )_{\nu = 1 ,\ldots , N}$ is a finite family of 
tetrahedra and $\Phi$ is a matching of the faces of the $T_{\nu}$'s reversing the orientation.  
For any tetrahedron $T$, we define $\mathrm{Trunc} (T)$ as
the tetrahedron truncated at each vertex. The space obtained from $\mathrm{Trunc} (T_{\mu})$  after 
matching the faces will be denoted by $K_\mathcal {T}$.  

A \emph{triangulation} of an oriented compact $3$-manifold $M$ with boundary is an abstract triangulation 
$\mathcal T$ together with an oriented homeomorphism $M\simeq K_\mathcal{T}$.

Remark that a knot complement is homeomorphic to the interior of such a triangulated manifold 
\cite[Section 1.2]{BFG}. And a theorem of Luo-Schleimer-Tillman \cite{LST} states that, up to passing to 
a finite cover, any complete cusped hyperbolic $3$-manifold may be seen as the interior of a compact 
triangulated manifold.

From now on, we fix a triangulation $\mathcal T$ of a compact manifold with boundary $\partial M$. We 
moreover add some combinatorial hypothesis on the triangulation: we assume that the link of any vertex is 
 a disc, a torus or an annulus -- \cite[Section 5.1]{BFG} and \cite[Section 2.1]{DGG}. Thus the boundary 
$\partial M$ decomposes as a union of hexagons lying in the boundary of the complex $K_\mathcal{T}$ and 
discs, tori and annuli lying in the links of the vertices. The latter are naturally triangulated by the traces of the tetrahedra. 

\subsection{Flags, Affine Flags}

As in the work of Fock and Goncharov \cite{FG}, the main technical tool will be the flags, affine flags, and their configuration.

Let $V=\C^n$, with its natural basis $(e_1,\ldots,\, e_n)$. All our flags will be complete: they are defined as "a line in a plane in a $3$-dim plane... in a hyperplane".

 More precisely, consider the exterior powers of $V$ and their projectivizations, for $m=1$ to $n-1$:
$$\Lambda^m V\textrm{ and }\P(\Lambda^m V).$$
Note that $\Lambda^1V\simeq V$ and $\Lambda^{n-1} V\simeq V^*$, the dual of $V$. We fix \emph{once for all} the isomorphism $\Lambda^n(V)\simeq \C$ by assigning $1$ to the element $e_1\wedge\ldots\wedge e_n$.

The space of flags in $V$ is a subset of $\prod_1^{n-1}\P(\Lambda^m V)$. To describe it, recall that $G$ acts on each exterior power of $V$, hence diagonally on the product. Moreover the standard flag $F_{\mathrm{st}}$ is defined by:
$$F_\mathrm{st}=([e_1],[e_1\wedge e_2],\ldots,[e_1\wedge \ldots \wedge e_{n-1}]).$$
Then the flag variety is the orbit of $F_\mathrm{st}$
$$\Fl:=\PGL(n,\C)\cdot F_\mathrm{st}\subset \prod_1^{n-1}\P(\Lambda^m V).$$
As the stabilizer of $F_\mathrm{st}$ is the Borel subgroup $B$ of the upper triangular matrices, we have $\Fl\simeq \PGL(n,\C)/B$.

The affine flag variety $\AFl$ lies above $\Fl$. It is a subset of the product $\prod_1^{n-1} \Lambda^m V$ defined as the orbit under $\SL(n,\C)$ of the standard affine flag
$$F_\textrm{aff.st}=(e_1,e_1\wedge_2,\ldots,e_1\wedge \ldots \wedge e_{n-1}).$$ 
As above, we get an isomorphism $\AFl\simeq \SL(n,\C)/U$, where $U$ is the subgroup of unipotent upper triangular matrices. 

We have a natural projection $\AFl\to \Fl$ consisting in projectivizing each coordinates.

Let us introduce an additional notation: if $F$ is a flag (or affine flag) and $1\leq m\leq n-1$, we denote by $F(k)$ its $k$-th coordinate in $\P(\Lambda^mV)$ (or $\Lambda^m V$).

\subsection{Tetrahedra of affine flags}\label{ss:affine}

Coordinates for a triangle of affine flags may be defined following \cite{FG}.  Consider the $n-1$-triangulation (see \cite[Section 1.16]{FG}) of a triangle $ijk$: that is, suppose your triangle is define in the plane by 
$$x+y+z=n-1, \, x,\, y \textrm{ and }z\textrm{ positive}.$$
And consider the triangulation given by the lines $x=p$ or $y=p$ or $z=p$, for $p=1$ to $n-1$. Each of this line is oriented as the parallel edge of the triangle (see figure \ref{fig:ntriang}).
The crossings of this line are the points with integer and non vanishing coordinates $x$, $y$, $z$ in the triangle. The oriented lines of the triangulation define a set of oriented edges between these crossings.

\begin{figure}
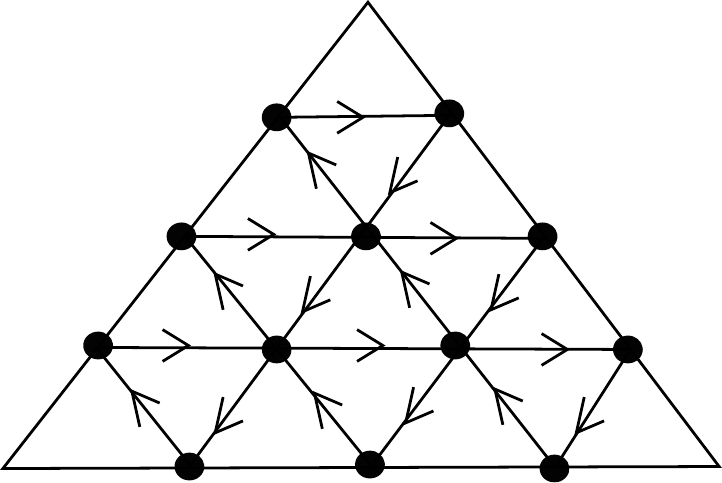
\caption{The $3$-triangulation of a triangle}\label{fig:ntriang}
\end{figure}

For a tetrahedron $T$, we consider the $n-1$-triangulation of its four faces.
As in \cite[Section 4.1.1]{BFG}, let $I_T$ be the set of crossings of the lines. Once again, the oriented lines of the triangulation define a set of oriented edges between neighbor points in $I_T$. We denote by $\alpha$ the elements of $I_T$.

Let $J_T^2\simeq \Z^{I_T}$ be the free $\Z$-module generated by $I_T$ and $(e_\alpha)_{\alpha\in I_T}$ its natural basis. Define a $2$-form $\Omega^2$ by, for $\alpha$, $\beta\in I_T$:
$$\Omega^2(e_\alpha,e_\beta)=\epsilon_{\alpha\beta},$$
where $\epsilon_{\alpha\beta}$ is the number of edges from $\alpha$ to $\beta$ minus the number of edges from $\beta$ to $\alpha$.

Denote by $(J^2)^*=\Hom(J^2,\Z)$ its dual $\Z$-module\footnote{See \cite[Section 4.1.2]{BFG} for a presentation of $\Z$-modules, duality and tensorization.}.
Then, a tetrahedron of affine flags $T=(F_1,F_2,F_3,F_4)$ in general position gives a point in $\C^\times \otimes J_T^2\simeq \Hom(J_T^2,\C^\times)$ by the following rule. Let $\alpha$ be an element of $I_T$. Let $ijk$ be an oriented face containing $\alpha$. Then $\alpha$ can be written as the barycenter of $i$, $j$, and $k$ with nonnegative integer weights $a$, $b$, $c$ verifying $a+b+c=n$. Then define:
$$a_\alpha(T)=F_i(a)\wedge F_j(b)\wedge F_k(c)\in \Lambda^nV\simeq \C.$$
The fact that the flags are in general position ensures that $a_\alpha(T)\in \C^\times$.

But there is a problem if $\alpha$ lies on an edge $ij$ and $n$ is even: whether we consider $\alpha$ to belong to one or the other adjacent face, the relative coordinate $a_\alpha(T)$ may change sign. In order to fix it, we assign to the barycenter of $i$ and $j$ with weight $a\leq b$ the coordinate:
$$a_\alpha(T)=F_i(a)\wedge F_j(b).$$
First, the less weighted coordinate.

Section 8 of \cite{FG} proves that a tetrahedra of affine flags $T$ is determined by the data:
$$a(T)=\sum_{\alpha\in I_T}a_\alpha(T) e_\alpha\in \C^\times \otimes J_T^2.$$

Moreover, consider the new tetrahedron of affine flags $T'$ given by multiplying the vector $F_i(m)$ by 
some $\lambda\in \C^\times$ (for some $1\leq i\leq 4$ and $1\leq m\leq n-1$). Then the vector $a(T')$ is 
related to $a(T)$ by:
$$a(T')=a(T)+\lambda v_i(m)$$
where $v_i(m)$ is the sum of the points of $I_T$ lying on the $m$-th plane parallel to the face $jkl$ 
(counted from the face), see figure \ref{fig:v_i(m)}.
One checks that the set of vectors $v_i(m)$ generates the kernel $\ker(\Omega^2)$.

\begin{figure}
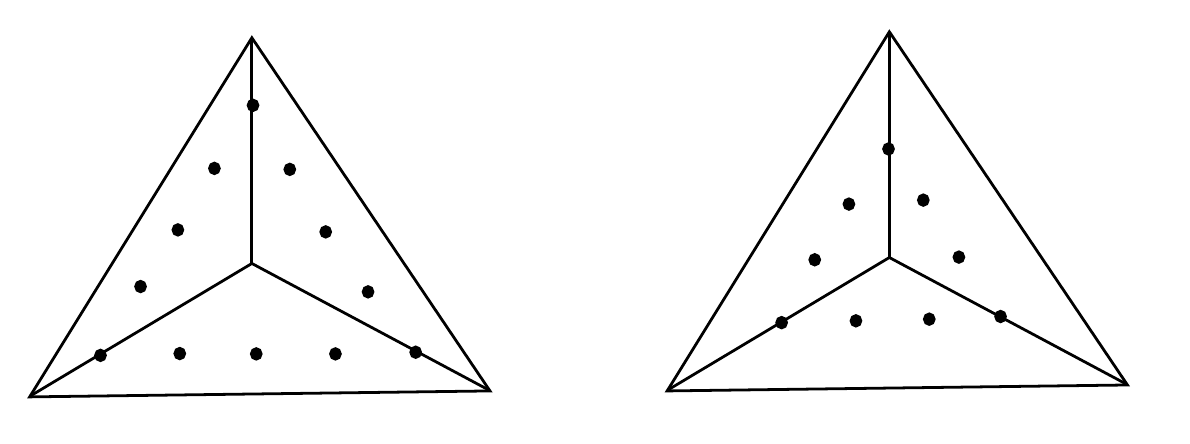
\caption{The vectors $v_i(1)$ and $v_i(2)$ for $n=4$}\label{fig:v_i(m)}
\end{figure}
\subsection{Tetrahedra of flags}

Consider the map:
$$p\colon \: J_T^2 \to (J_T^2)^*$$
given by $p(v)=\Omega^2(\cdot, v)$.
Let $J_T^*$ be its image, and $J_T=J_T^2/\ker(\Omega^2)$ be its dual $\Z$-module.
Then, one checks that this two spaces share the same dimension $2(n-1)^2$.
Let $p^*\colon\: \C^\times \otimes J_T^2 \to \C^\times\otimes J_T^*$ be the dual map.

To a tetrahedron of flags $T$, one associates a point in $\C^\times \otimes J_T^*$ by the following way: 
let $T_\textrm{aff}$ be a lift of $T$ as a tetrahedron of affine flags. And define
$$z(T)=p^*(a(T_\textrm{aff}))\in \C^\times \otimes J_T^*.$$
The considerations at the end of the previous section imply that $z(T)$ is well-defined. This coincide 
(up to a sign) with the $\mathcal X$-coordinates of Fock and Goncharov defined using tri-ratios and 
cross-ratios, and with the coordinates defined in \cite{BFG} for $n=3$.

Note that the space $J_T^*$ carries a natural $2$-form $\Omega^*$ defined by: if $v=p(u)$ and $v'=p(u')$ belong to $J^*_T$, then
$$\Omega^*(v,v')=\Omega^2(u,u').$$ 
This form is symplectic. Similarily, $J_T$ carries a symplectic form $\Omega$ defined as the projection of $\Omega^2$ to $J_T$. The forms $\Omega$ and $\Omega^*$ match through duality.

Dimofte-Gabella-Goncharov \cite{DGG} gave coordinates (called octahedron coordinates) for a tetrahedron 
of flags and relate them to the $\mathcal X$-coordinates of Fock and Goncharov. They proved (see \cite[Section 4]{DGG}) that the subset of $\C^
\times \otimes J^*$ consisting of vectors $z(T)$ associated to an actual tetrahedron of flag form a 
lagrangian submanifold $\mathcal L_T$.

The space $\C^\times\otimes J_T^*$ parametrize the space of framed flat $\PGL(n,\C)$-connections on the boundary 
of the tetrahdron (i.e. a sphere with four holes). Belonging to $\mathcal L_T$ is a fillability condition: 
does the connection extend to the interior of the tetrahedra. In terms of representations of groups, $\C^
\times\otimes J_T^*$ describes the (decorated) representations of the fundamental group of the four-holed 
sphere that are unipotent (the loop around a hole is mapped to a unipotent element). The representations parametrized by $\mathcal L_T$ equal the identity.

For now on, we will mostly forget about the lagrangian sub-manifold $\mathcal L_T$ and work at the level of $J_T^*$.

\subsection{Holonomy in a tetrahedron}

Consider a tetrahedron $T$, and mark three points in each face, one near each vertex. Join the points in 
the same face and at the same vertex. The resulting graph may be realized as the 3d associahedron \cite
[Section 4.3]{DGG}. Then an element $z$ 
in $\C^\times\otimes J^*_T$ defines a holonomy representation, that is a matrix of $\PGL(n,\C)$ 
associated to each oriented edge of the graph. Indeed, from \cite{FG}, such a $z$ parametrize a framed 
flat $\PGL(n,\C)$ 
connection on the four-holed sphere and as such give an holonomy representation. More precisely, each of 
the above mentioned point defines a \emph{snake} and thus a projective basis \cite[Sections 9.7, 9.8]{FG} 
and \cite[Section 4]{DGG}. The matrics are then base changes. The $\PGL(3,\C)$-case may be explained 
without the use of snakes, see \cite[Section 5.4]{BFG}.

\section{Decorated complex and holonomy}

We glue here tetrahedra together, in order to get information on the space of representation of $\pi_1(M)$. There are constraints, the analogous of the gluing equations.

\subsection{Gluing equations}

The gluing equations are the conditions we have to impose in order to glue the tetrahedra. So let $T_1$,$
\ldots$, $T_\nu$ be the tetrahedra of the triangulation of $M$, and $z(T_\mu)$ be their coordinates as 
tetrahedra of flags. Denote by $I$ the vertices of the $I_{T_\mu}$ that remain after gluing in the interior of the complex $K$. These vertices belong to the internal faces and edges of $K$. Each element of $I$ may be seen as a subset of $\cup_{\mu=1}^\nu I_{T_\mu}$. This subset consists of two element if the vertex in $I$ is in a face of the complex $K$ and of several if it is on an edge.

The constraints have been described in \cite{DGG} and are natural generalization of those of \cite{BFG}. 
Indeed, when two faces of $T_\mu$ and $T_{\mu'}$ are glued, one should ensures that the triangle of flags 
decorating them match (up to the orientation). This translates into:\\
{\bf Faces equation:} If the face point $\alpha$ in the tetrahedron $T_\mu$ is glued to the point $
\alpha'$ in $T_\mu'$, then $$z_\alpha(T_\mu)z_{\alpha'}(T_{\mu'})=1.$$

Another condition is that the holonomy of looping around an edge should be equal to the identity. This 
translates (cf explanation for the holonomy below) into:\\
{\bf Edges equations:} For an edge $ij$ of the complex and $a$, $b$ two integer with $a+b=n$, let $T_1$,$
\ldots$, $T_\mu$ be the tetrahedra abutting to the edge $ij$. Then, fix some integer $1\leq m\leq n-1$ 
and denote by $\alpha$ the $m$-th element of $I$ on the edge $ij$ (counting from $j$) in any of the $T_
\gamma$. Then we should impose:
$$\prod_{\gamma=1}^\mu z_\alpha(T_\gamma)=1.$$

\subsection{Holonomy and the decorated variety of representations}\label{ss:holonomy}

Let $J^*$ be the orthogonal sum of the $J^*_{T_\mu}$ and still denote $\Omega^*$ the symplectic form on  
it. Let us construct a graph by considering the associahedra associated to the tetrahedra and adding an edge between any pair of points lying on glued faces near the same vertex \cite[Section 4]{DGG}.

A point $z\in \C^\times\otimes J^*$ represents a set of framed flat $\PGL(n,\C)$-connection on each boundary of the tetrahedra. If it fulfills face and edge equations, this induce a holonomy representation for the graph constructed above. Here is how to compute this representation. First choose a loop in this graph and decompose it into three elementary steps \cite[Section 5.4]{BFG}:
\begin{enumerate}
\item An edge between two vertices of the graph lying on the same face (say the vertices $i$ to $j$ in the face $ijk$ of a tetrahedron $T$).
\item Turning left around an edge $ij$ in a tetrahedron $T$ and landing in the following tetrahedron. That is following the edge from the vertex near $i$ in the face $ijk$ of the tetrahedron $T$ to the vertex near $i$ in the face $ilj$ of the same tetrahedron, and then jump to the vertex near $i$ in the face $ijl$ of the glued tetrahedron.
\item Similarly, turning right around an edge $ij$ in a tetrahedron $T$ and landing in the following tetrahedron.
\end{enumerate}
Then, each of this step corresponds to a base change that can be computed. Indeed, let $T$ be the tetrahedron in which it takes place and let $z=(z_\alpha)_{\alpha\in I_T}\in \C^\times\otimes J_T^*$ be its associated coordinates.
Then there are three matrices $T(z)$, $L_{ij}(z)$ and $R_{ij}(z)$ corresponding to the three base changes.

We are not interested here in describing $T(z)$. In the case $\PGL(3,\C)$ it is given in \cite[Section 
5.4]{BFG} and in the general case may be computed using either \cite[Section 9]{FG} or \cite[Section 4]
{DGG}. From the same references, we compute the matrices $L_{ij}$ and $R_{ij}$. Denote by $ij(m)$ the $m$-th point in $I_T$ lying on the edge (counting from $j$). Then we get that $L_{ij(m)}$ is a diagonal 
matrices depending only on the edge coordinates $z_{ij(m)}$ (see \cite[Lemma 9.3]{FG}):
$$L_{ij}(z)=\begin{pmatrix} 
1 & 0&0&\ldots&0 \\
 0& z_{ij(1)}&0&\ldots&0\\
 0&0&z_{ij(1)}z_{ij(2)}&\ldots &0\\
 \vdots &&&\ddots & \vdots\\
 0&\ldots&\ldots &0&\prod_{m=1}^{n-1}z_{ij(m)}\end{pmatrix}
$$

The computation for $R_{ij}(z)$ is harder. But we are only interested here in its diagonal 
part. In order to describe it, define $Z_{ij(m)}$ to be the product of all $z_\alpha$ for $
\alpha\in I_T$ lying at the level $m$ above the face $jlk$ and \emph{not} in the face $ilk$ 
(see figure \ref{fig:Rij}).
\begin{figure}
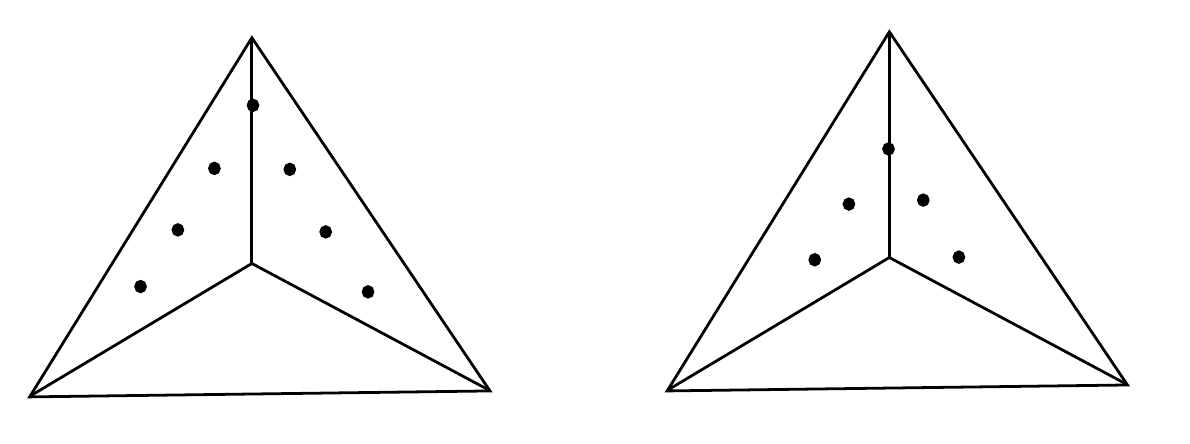
\caption{The points involved in the computation of $Z_{ij(1)}$ and $Z_{ij}(2)$ for $n=4$}\label{fig:Rij}
\end{figure}
Then, from \cite{DGG} or \cite[Sections 9.8 and 9.9]{FG}, one gets:
$$R_{ij}(z)=\begin{pmatrix} 
1 & \star&\star&\ldots&\star \\
 0& \pm (Z_{ij(1)})^{-1}&\star&\ldots&\star\\
 0&0&\pm (Z_{ij(1)}Z_{ij(2)})^{-1}&\ldots &\star\\
 \vdots &&&\ddots & \vdots\\
 0&\ldots&\ldots &0&\pm \prod_{m=1}^{n-1}(Z_{ij(m)})^{-1}\end{pmatrix}
$$

Remark that the fact that a point $z\in \C^\times \otimes J^*$ fulfills the edge and face 
conditions and the lagrangian constraint implies (and in fact is equivalent to) that if two 
loops in the graph based at the same vertex of the graph are homotopic in $M$, then their 
holonomies are equal. This explain why such a $z$ parametrizes (decorated) representations 
of $\pi_1(M)$.

\section{Coordinates for the boundary and the symplectic isomorphism}

Fix once again an element $z\in \C^\times\otimes J^*$, seen as a collection of framed flat $
\PGL(n,\C)$-connections on the tetrahedra boundaries. If it fulfills the face an edge 
conditions, it should induce a framed flat $\PGL(\C)$-connection on $\partial M$. We 
explain here how to describe this connection using coordinates.

Recall that $\partial M$ decomposes as the union of the boundary of the tetrahedra complex 
$K$ and discs, tori or annuli lying in the links of the vertices of $K$. Discs will not 
need coordinates, as the associated moduli space is trivial. We describe first the 
coordinates for the boundary of $K$ and then for the tori/annuli part.

We define $J^2$ as the orthogonal sum of the $J^2_T$, and we keep the notation $\Omega^2$ for its $2$-form.

\subsection{Boundary of the complex}

The boundary of the complex is homeomorphic to a punctured triangulated surface $\Sigma$. 
We use the usual Fock and Goncharov coordinates for this surface \cite[Section 9]{FG}. 
Namely, let $I_\Sigma$ be the vertex of the $n-1$-triangulation of $\Sigma$.  Define $J^2_
\Sigma=\Z^{I_\Sigma}$. This $\Z$-module carries a $2$-form $\Omega_\Sigma$ defined 
similarly to $\Omega^2$ using the oriented edges of the $n-1$-triangulation. Thus there is 
a map:
$$\begin{matrix} p_\Sigma\colon & J^2_\Sigma&\to &(J^2_\Sigma)^* \\ & v &\mapsto & \Omega_\Sigma(\cdot,v)\end{matrix}$$

We denote by $J_\Sigma^*$ its image and $J_\Sigma$ the quotient of $J^2_\Sigma$ by the kernel $\ker(\Omega_\Sigma)$.
Restricting the framed flat $\PGL(n,C)$-connection given by $z$ to the bounday of $K$ yields such connexion on $\Sigma$. Its coordinates belong to $J^*_\Sigma$. This  operation defines a map $J^*\to J^*_\Sigma$ that is so defined in coordinates:
for $\alpha \in I_\Sigma$, there is a subset of $\cup_{\mu=1}^\nu I_{T_\mu}$ consisting of the $\beta$ that are identified to $\alpha$ after gluing. For each of these $\beta$, denote by $z_\beta$ the corresponding coordinate of $z$. Then $z_\alpha^\Sigma$ verifies:
$$z_\alpha^\Sigma\prod_{\beta\textrm{ identified to }\alpha}z_\beta=1.$$

\subsection{Coordinates for tori and annuli}\label{ss:coordinatelinks}

We choose, once for all, a symplectic basis of the homology $(l,m)$ for each tori and a 
generator $s$ of the homology together with a generator $t$ of the homology relative to the 
boundary for each annuli, with intersection number $\iota (s,t)=1$. Each of these tori and 
annuli is the link of a vertex of the tetrahedra complex. We choose for each of 
them a representative as a path as in section \ref{ss:holonomy} which remains near the 
vertex. Denote by $\nu_t$ the number of torus links and $\nu_a$ the number of annulus links.

Using the rules of section \ref{ss:holonomy}, one may compute the holonomy of this paths. 
This is always a product of upper-triangular matrices. Denoting by $\rho$ the holonomy representation associated to $z$, one may write:
$$\rho(l)=\begin{pmatrix} 1&\star &\ldots &\star\\0&L_1&\star&\star\\
\vdots&&\ddots&\vdots\\ 0&\ldots&0&L_1L_2\cdots L_{n-1}\end{pmatrix}$$
and define accordingly the number $(M_m)$, $(S_m)$, $(T_m)$ for $1\leq m\leq n-1$.

The coordinates associates to the boundary are these vectors:
$$(L_m,M_m)\in (\C^\times)^{2(n-1)}\textrm{ for each torus}$$
$$\textrm{ and }(S_m,T_m)\in (\C^\times)^{2(n-1)}\textrm{ for each annulus}.$$
We denote by $(L,M,S,T)\in(\C^\times)^{2(n-1)(\nu_a+\nu_t)}$ this vector.
 
This spaces carry a natural symplectic form, the Goldman-Weil-Peterson form $\mathrm{wp}$. It is formally defined as the coupling of the cup-product and the Killing form on $\mathfrak {sl}(n,\C)$. We will define it precisely later on.
 
The main result of our paper is stated as follows:
\begin{theorem}\label{th:main}
Restricted to the subvariety of $\C^\times \otimes J^*$ defined by the face and edge conditions, the $2$-form $\Omega^*$ is the pull-back by the map 
$$z\mapsto (z^\Sigma,L,M,S,T)$$
of a $2$-form $\mathrm{wp}$ on $(\C^\times\otimes J_\Sigma^*)\times (\C^\times)^{2(n-1)(\nu_a+\nu_t)}$.

Moreover $\mathrm{wp}$ coincide with the Weil-Petersson form in restriction on 
each torus or annulus and with $\Omega_\Sigma$ in restriction to $\C^\times
\otimes J_\Sigma^*$. For this form $\mathrm{wp}$, the tori part is orthogonal 
to the annuli part and the boundary part. However there is a coupling between 
the annuli and boundary parts.
\end{theorem}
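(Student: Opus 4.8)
The plan is to reduce the statement to linear algebra. The subvariety cut out by the face and edge equations sits inside the torus $\C^\times\otimes J^*$ as the common zero locus of monomial equations, and $\Omega^*$ is the translation-invariant $2$-form whose value on the tangent space $\C\otimes J^*$ (identified via $d\log z$) is the constant bilinear form $\Omega^*$; so everything happens at the level of tangent spaces. Write $W\subset \C\otimes J^*$ for the linear subspace obtained by linearizing the face and edge equations — each becomes one linear relation $\sum d\log z_\bullet=0$ — and write $r\colon W\to \C\otimes J_\Sigma^*\oplus \C^{2(n-1)(\nu_a+\nu_t)}$ for the derivative of $z\mapsto(z^\Sigma,L,M,S,T)$, which is linear because each output coordinate is a monomial in the $z_\alpha$ (visible from the gluing formula for $z^\Sigma$ and from the diagonal entries of $L_{ij}$ and $R_{ij}$). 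The theorem asserts the factorization $\Omega^*|_W=r^*\mathrm{wp}$; I would prove it by showing that $\ker r\cap W$ is contained in the radical of $\Omega^*|_W$, so that $\Omega^*$ descends through $r$, and then identifying the descended form with the explicit $\mathrm{wp}$. This is the reorganization of the Neumann--Zagier argument carried out for $n=3$ in \cite{BFG}, whose architecture I would follow.

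The core is a discrete summation by parts. Lift $\Omega^*=\sum_\mu\Omega^*_{T_\mu}$ to $\Omega^2=\bigoplus_\mu\Omega^2_{T_\mu}$ and recall that $\Omega^2(e_\alpha,e_\beta)=\epsilon_{\alpha\beta}$ counts, with sign, the oriented edges of the $n-1$-triangulations joining $\alpha$ to $\beta$. I would classify these oriented edges by the position of their endpoints after gluing: edges internal to a pair of glued faces, edges radiating around an interior edge $ij$ of $K$, and edges carried by $\partial K$ or by a vertex link. For two faces glued by $\Phi$, which reverses orientation, the face equations $z_\alpha z_{\alpha'}=1$ pair the two local contributions with opposite signs, so those terms cancel on $W$. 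Around an interior edge of $K$ the cyclic relations $\prod_\gamma z_\alpha(T_\gamma)=1$ play the same role for the radial edges at every level $m$. What is left is supported on $\partial M$: the edges lying in the punctured surface $\Sigma$ reassemble into $\Omega_\Sigma$, and those carried by a torus or annulus link reassemble into a form on the eigenvalue coordinates of that link.

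To identify this surviving link form with Weil--Petersson I would use the holonomy of Section~\ref{ss:holonomy}. The diagonal of $L_{ij}(z)$ and of $R_{ij}(z)$ expresses the eigenvalue data $(L_m,M_m)$ and $(S_m,T_m)$ as explicit monomials in the $z_\alpha$ --- the products $\prod_{m}z_{ij(m)}$ and the quantities $Z_{ij(m)}$ of figure~\ref{fig:Rij} --- so that $r$ on the link block is computed directly by logarithmic differentiation of these monomials. Pairing two such expressions under the surviving part of $\Omega^2$ then reproduces, level by level, the standard eigenvalue formula for the Goldman/Weil--Petersson pairing on the character variety of a torus (resp.\ annulus). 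Here one must use that $z$ is well-defined only in $J^*$, i.e.\ modulo the span of the vectors $v_i(m)$ generating $\ker(\Omega^2)$; this is precisely the quotient that makes the boundary eigenvalue data and the combinatorial form compatible.

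The orthogonality statement follows from the support of the surviving terms. A torus link is disjoint from $\partial K$ and from every other link, so its block shares no oriented edge with $\Sigma$ or with another link, and is therefore $\Omega^*$-orthogonal to everything else. An annulus link, on the contrary, has its relative generator $t$ running out to $\partial K$, so it shares edges with $\Sigma$, producing exactly the announced coupling between the annulus and boundary parts. The step I expect to be hardest is the edge cancellation around an interior edge where several tetrahedra abut: in contrast with the clean two-term face pairing, the signs $\epsilon_{\alpha\beta}$ and the orientations of the radial edges accumulate around the edge and must be shown to cancel under the cyclic relation simultaneously for all intermediate levels $1\le m\le n-1$, while remaining compatible with the sign ambiguity built into $a_\alpha(T)$ when $n$ is even. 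Closing this bookkeeping is the delicate point.
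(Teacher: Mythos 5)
Your opening reduction is the same first move as the paper's: linearize the monomial face and edge equations and try to make $\Omega^*$, restricted to $W=\C\otimes\ker((F')^*)$, descend through the boundary map. But note that the ``descent'' half of your plan is essentially formal: with $G^*=p\circ F$ the gauge directions satisfy $\mathrm{Im}(G^*)=\ker((F')^*)^{\perp_{\Omega^*}}$ (equation \eqref{eq:ortho} of the paper), so $\Omega^*$ automatically induces a form on $\mathcal{H}(J^*)=\ker((F')^*)/\mathrm{Im}(G^*)$. The genuine content, and the place where your sketch has gaps, is twofold. First, to factor through $r$ you need $\ker(r|_W)\subset\mathrm{Im}(G^*)$: a deformation preserving all the eigenvalue coordinates and the $\Sigma$-coordinates must be a gauge direction. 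Your edge-classification cancellation, even if carried out in full, would show only that $\Omega^*(u,v)$ is computed by terms supported near $\partial M$; it would not show that this expression depends on $u$ and $v$ \emph{only through} $r(u)$ and $r(v)$, i.e.\ only through the particular monomials $L,M,S,T,z^\Sigma$ --- and vanishing of $r(u)$ does not make the individual boundary-adjacent coordinates of $u$ vanish. Second, identifying the surviving pairing with the Weil--Petersson form is not a level-by-level matrix computation: it is exactly where the paper needs its main technical result (Theorem \ref{thm:homologies}), namely the chain map $h\colon C_1(\mathcal D,L)\to J^2$ linearizing $\Hol_\periph$, its $\Omega$-adjoint $g$, the verification that both descend to homology, the identity $\bar g\circ\bar h=4$, and above all the homological ``miracle'' lemma borrowed from Neumann and \cite{BFG}: $\sum_\mu(c_{ik}'-c_{il}')$ is homologous to $2c$ and $\sum_\mu(c_{ki}'-c_{kj}'+c_{jl}'-c_{jk}'+c_{lj}'-c_{li}')$ vanishes in $H_1(\partial M)$. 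This is the step that converts the combinatorial pairing into intersection numbers on $\partial M$, which is what the Weil--Petersson form is; nothing in your proposed bookkeeping produces intersection numbers, so the crucial idea is missing rather than merely postponed.

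In addition, your argument for the orthogonality statements would fail as written. The form $\Omega^2$ pairs points lying in one and the same tetrahedron, and a single tetrahedron can simultaneously touch a torus link, an annulus link and $\partial K$; moreover the eigenvalue monomials (the $Z_{ij(m)}$ of figure \ref{fig:Rij}) involve points spread through the whole tetrahedron, not only points carried by the link. So the torus block is \emph{not} $\Omega^*$-orthogonal to the $\Sigma$ block for reasons of disjoint support at the chain level; the orthogonality in Theorem \ref{th:main} is a property of the \emph{descended} form, which the paper obtains from the homological computation together with the dimension count of the general case, $\dim\mathcal{H}(J)=2(n-1)\nu_t+(n-1)\nu_a+\dim(J_\Sigma)$, following \cite[Section 9]{BFG}; that count is also what explains why the annuli--boundary coupling survives while the tori decouple. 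To repair your proposal you would need to supply precisely this homological layer (or an equivalent of Neumann--Zagier's original flattening argument), at which point you would have reconstructed the paper's proof rather than found an alternative to it.
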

The form $\mathrm{wp}$ should be the Weil-Petersson form on the space of representation of $\partial M$. Unfortunaltely, this is not yet clear from the literature.

In order to prove this theorem, we remark that, let alone the lagrangian condition, 
every condition is expressed as "a product of $z$-coordinates$=\pm1$". So this is a good 
idea to linearize everything.

\section{Linearization}

This section is a direct generalization of \cite[Section 7]{BFG}.

\subsection{Face and edge conditions} 
 
We consider another $\Z$-module\footnote{This corresponds to $C_1^\textrm{or}+C_2$ in \cite{BFG}.}: $\Z^I$. Recall that $I$ is the set of vertices of the $n-1$-triangulations of the tetrahedra that remain in the interior of the complex $K$ after gluing. Let $(e_\alpha)_{\alpha\in I}$ be its natural basis. Any $\alpha \in I$ may be seen as a subset of $\cup_{\mu=1}^\nu I_{T_\mu}$. This yields a map:
$$\begin{matrix}F\colon & \Z^I&\to&J^2\\& e_\alpha &\mapsto & \sum_{\beta\in \alpha} e_\beta\end{matrix}$$
By duality, one gets a line of applications:
$$\Z^I\xrightarrow{F}J^2\xrightarrow{p}(J^2)^*\xrightarrow{F^*}(\Z^I)^*.$$
From now on, we identify $\Z^I$ with its dual through the canonical basis.

\begin{lemma}
The composition $F^*\circ p \circ F$ vanishes.
\end{lemma}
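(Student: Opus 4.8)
The plan is first to make the composition completely explicit and then to reduce its vanishing to a local cancellation driven by the orientation-reversing matching $\Phi$. Using the identification of $\Z^I$ with its dual, $F^*\circ p\circ F$ is the bilinear form on $\Z^I$ given on basis vectors by
$$(F^*\circ p\circ F)(e_\alpha)(e_{\alpha'})=p(F e_\alpha)(F e_{\alpha'})=\Omega^2(F e_{\alpha'},F e_\alpha)=\sum_{\gamma\in\alpha',\ \beta\in\alpha}\epsilon_{\gamma\beta},$$
where I regard each interior vertex $\alpha,\alpha'$ of $K$ as the subset of $\cup_\mu I_{T_\mu}$ of its representatives. So the lemma is equivalent to the statement that $F(\Z^I)$ is isotropic for $\Omega^2$: for any two such vertices, the signed count of oriented triangulation edges joining a representative of $\alpha'$ to a representative of $\alpha$ must vanish. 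The diagonal $\alpha=\alpha'$ is free from the antisymmetry of $\Omega^2$, so only the off-diagonal count is at issue.

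The first structural input I would use is that $\epsilon_{\gamma\beta}\neq 0$ only when $\gamma$ and $\beta$ are neighbours inside one common face of one common tetrahedron, so the sum decomposes tetrahedron by tetrahedron and face by face. The engine is then the hypothesis that $\Phi$ reverses orientation: on two glued faces it carries the oriented $n-1$-triangulation of one onto that of the other reversing every oriented edge. I would separate the incident edges into those transverse to the $1$-skeleton of $K$ (running into the interior of a face) and those lying along the $1$-skeleton. For a transverse edge joining a representative $\gamma\in\alpha'$ to a representative $\beta\in\alpha$ inside a face $g$: the face $g$ is interior, hence glued to a face $g^*$ of a neighbouring tetrahedron, the identification sends $\gamma,\beta$ to representatives $\gamma^*\in\alpha'$, $\beta^*\in\alpha$ of the same vertices, and the matched edge carries the opposite orientation, so $\epsilon_{\gamma\beta}+\epsilon_{\gamma^*\beta^*}=0$. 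Pairing every transverse contribution with its image across the gluing of the face it lives in, all transverse contributions cancel. In particular, if either $\alpha$ or $\alpha'$ is a face-point of $K$ (two representatives, mutually glued) the whole sum is already zero.

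This leaves the genuinely delicate case, which I expect to be the main obstacle: two vertices lying on interior edges of $K$, where along-$1$-skeleton contributions survive. Such a segment joins two consecutive edge-points on a single edge $e$, and it sits on an edge of each abutting tetrahedron, shared by the two faces of that tetrahedron meeting along $e$; since the two coherently oriented faces induce opposite boundary orientations on $e$ (the $\partial^2=0$ sign), the two copies of the segment enter $\epsilon$ with opposite signs and cancel already inside each tetrahedron. Thus every along-edge contribution dies before one even goes around the cycle $T_1,\dots,T_r$ of tetrahedra abutting $e$. The only point requiring real care is the compatibility of this cancellation with the way representatives are indexed around $e$ -- the levels ``counted from $j$'' of the edge equations -- and the verification that the physical identification of edge-points is consistent with the successive orientation reversals; this bookkeeping is exactly the computation carried out for $n=3$ in \cite[Section 7]{BFG}, and the argument here is its direct transcription.
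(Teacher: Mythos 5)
Your proof is correct and follows essentially the same route as the paper's: both rest on the pairwise cancellation, across each orientation-reversing face gluing, of the two copies of every oriented edge of the triangulation — yours packaged as a sign-reversing involution on the arrow contributions to $\Omega^2(Fe_{\alpha'},Fe_\alpha)$, the paper's as a component-by-component computation of $F^*\circ p\circ F(e_\alpha)$. The case you flag as the main obstacle (edges along the $1$-skeleton of $K$) is in fact harmless: with the paper's convention the quiver only has arrows along the interior lines $x=p$, $p\geq 1$, so such edges do not occur, and with the alternative (upward-triangle) convention they cancel inside each single tetrahedron exactly by the $\partial^2=0$ argument you give, so your treatment of that case is right and the real content is the transverse cancellation you already carried out.
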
 
 
 \begin{proof}
 This is an inspection without difficulty. For example, if $\alpha$ is inside a face of the complex $K$, $F(\alpha)$ is of the form $e_\alpha(T_\mu)+e_{\alpha}(T_{\mu'})$. Applying $p$, for each neighbor $\beta$ of $\alpha$, we get a vector $\pm  (e_\beta^*(T_\mu)-e_{\beta}^*(T_{\mu'}))$. Hence, the $e_\beta^*$-component of $F^*\circ p\circ F(e_\alpha)$ vanishes, as well as every other component.
 \end{proof}
 
Following closely \cite[Section 7.3]{BFG}, letting $G : J \rightarrow Z^I$ be the map induced by $F^* \circ p$ and 
$F' : \Z^I \to J$ be the map
$F$ followed by the canonical projection from $J^2$ to $J$, we get a complex:
\begin{equation} \label{complexd}
\Z^I \stackrel{F'}{\rightarrow} J \stackrel{G}{\rightarrow} \Z^I .
\end{equation}
Similarly, letting $G^*= p \circ F$ and $(F')^*$ be the restriction of $F^*$ to $\mathrm{Im} (p) = J^*$ we get the dual complex:
\begin{equation} \label{complex}
\Z^I \stackrel{G^*}{\rightarrow} J^* \stackrel{(F')^*}{\rightarrow} \Z^I .
\end{equation}
We define the homology groups of these two complexes:
$$\mathcal{H} (J) = \mathrm{Ker} (G) / \mathrm{Im} (F') = \mathrm{Ker} (F^* \circ p) / (\mathrm{Im} (F) + \mathrm{Ker} ( p))$$ 
and 
$$\mathcal{H} (J^*) = \mathrm{Ker} ((F')^*) / \mathrm{Im} (G^*) = (\mathrm{Ker}(F^* ) \cap \mathrm{Im} ( p)) / \mathrm{Im} (p \circ F).$$ 
We note that:
\begin{equation} \label{eq:ortho}
\mathrm{Ker} (G) = \mathrm{Im} (F')^{\perp_{\Omega}} \mbox{ and } \mathrm{Im} (G^*) = \mathrm{Ker} ((F')^*)^{\perp_{\Omega^*}}.
\end{equation}
The symplectic forms $\Omega$ and $\Omega^*$ thus induce skew-symmetric bilinear forms on 
$\mathcal{H} (J)$ and $\mathcal{H} (J^*)$. These spaces are obviously dual spaces and the bilinear
forms match through duality.

We claim that $(F')^*$ linearize the face and edge equations:
\begin{lemma}
An element $z\in\C^\times\otimes J^*$ fulfills the face and edge equations if and only if: 
$$z\in \C^\times\otimes\ker((F')^*).$$
\end{lemma}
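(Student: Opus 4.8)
The plan is to compute the map $(F')^*$ explicitly in coordinates and recognize each of its components as a face or an edge equation. First I would reinterpret the data: via the inclusion $J^* = \mathrm{Im}(p) \subset (J^2)^*$ and the canonical isomorphism $\C^\times \otimes \Hom(J^2,\Z) \simeq \Hom(J^2, \C^\times)$ (valid since $J^2$ is free of finite rank), an element $z \in \C^\times \otimes J^*$ may be viewed as a group homomorphism $z : J^2 \to \C^\times$. Under this identification the coordinate $z_\beta$ attached to a vertex $\beta \in \cup_\mu I_{T_\mu}$ is simply $z(e_\beta)$, and the condition that $z$ actually lie in $\C^\times \otimes J^*$ (rather than all of $\C^\times\otimes (J^2)^*$) records exactly that $z$ is trivial on $\ker(\Omega^2)$, i.e. on the vectors $v_i(m)$.

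Next I would unwind the definition of $(F')^*$. Since $(F')^*$ is the restriction of $F^*$ to $J^*$ and $F^*(\phi) = \phi \circ F$, tensoring with $\C^\times$ gives, for each basis vector $e_\alpha$ of $\Z^I$,
$$\big((\mathrm{id}\otimes (F')^*)(z)\big)(e_\alpha) = z\big(F(e_\alpha)\big) = z\Big(\sum_{\beta \in \alpha} e_\beta\Big) = \prod_{\beta \in \alpha} z_\beta,$$
where I used that $\alpha \in I$ is by definition the subset of $\cup_\mu I_{T_\mu}$ of vertices identified to it after gluing. The whole content of the lemma is then a case analysis of this product. If $\alpha$ lies in an interior face of $K$, then $\alpha = \{\beta,\beta'\}$ with $\beta = \alpha(T_\mu)$ and $\beta' = \alpha'(T_{\mu'})$ the two glued face points, so the $\alpha$-component reads $z_\alpha(T_\mu) z_{\alpha'}(T_{\mu'})$, which equals $1$ precisely when the face equation holds. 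If $\alpha$ lies on an interior edge $ij$, then $\alpha = \{\alpha(T_1),\ldots,\alpha(T_\mu)\}$ runs over the tetrahedra abutting the edge, and the $\alpha$-component reads $\prod_\gamma z_\alpha(T_\gamma)$, which equals $1$ precisely when the edge equation holds. Combining the two cases, $z$ fulfills all face and edge equations if and only if $(\mathrm{id}\otimes (F')^*)(z)$ is the trivial element of $\C^\times \otimes \Z^I \simeq (\C^\times)^I$, i.e. if and only if $z \in \ker(\mathrm{id}\otimes (F')^*)$.

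The last step is to identify this kernel with $\C^\times \otimes \ker((F')^*)$, and this is the one place I expect to require genuine care. Because $\mathrm{Im}((F')^*)$ is a subgroup of the free $\Z$-module $\Z^I$ it is itself free, hence projective, so the short exact sequence $0 \to \ker((F')^*) \to J^* \to \mathrm{Im}((F')^*) \to 0$ splits; tensoring with $\C^\times$ exhibits $\C^\times \otimes \ker((F')^*)$ as a direct summand of $\C^\times \otimes J^*$ that is manifestly contained in $\ker(\mathrm{id}\otimes (F')^*)$. The main obstacle is the reverse inclusion: since $\C^\times$ is not a flat $\Z$-module, the two could a priori differ, the discrepancy being a quotient of $\mathrm{Tor}_1^\Z(\C^\times, \mathrm{coker}((F')^*))$, that is, governed by the torsion of the cokernel (this is where the ubiquitous signs of the Neumann--Zagier theory could in principle intervene). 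I would therefore close the argument by checking — or invoking from the parallel computation in \cite[Section 7.3]{BFG} — that this cokernel presents no torsion obstruction, after which the equivalence established above yields the lemma.
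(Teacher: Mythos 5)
Your proposal is correct and follows essentially the same route as the paper: the paper's entire proof is your coordinate inspection, namely that the $e_\alpha$-component of $(F')^*(z)=z\circ F'$ is $\prod_{\beta\in\alpha}z_\beta$, which is a face equation when $\alpha$ lies on an interior face and an edge equation when $\alpha$ lies on an interior edge. The only divergence is your final paragraph: the possible $\mathrm{Tor}_1(\C^\times,\mathrm{coker}((F')^*))$ discrepancy between $\ker(\mathrm{id}\otimes(F')^*)$ and $\C^\times\otimes\ker((F')^*)$ is a genuine subtlety, but the paper does not address it at all --- it silently identifies the two, consistent with its declared habit of treating all conditions only up to sign --- so on this point you are being more careful than the source rather than missing a step it supplies.
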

\begin{proof}
Once again this is proved by inspection: the $e_\alpha^*$ component of 
$(F')^*(z)=z\circ F'$ 
is the product of the component $z_\beta$ for $\beta$ belonging to $\alpha$. If $\alpha$ 
sits on a face, this gives a face condition; if $\alpha$ sits on an edge, this gives an 
edge condition. 
\end{proof}

\subsection{Coordinates for the links}

The coordinates we have constructed for a torus $T$ may be seen as an element of 
$H^1(T,(\C^\times)^{n-1})$. We construct now a map at the level of the chains. Once again, we are very close of \cite[Section 7.1]{BFG}.

\subsubsection{Simplicial decompositions of the links}

Each boundary surface $S$ in the link of a vertex is triangulated by the traces of the 
tetrahedra; from this we build the CW-complex $\mathcal D$ whose edges consist of the inner 
edges of the first barycentric subdivision, see Figure \ref{fig:celldecomposition}. We 
denote by $\mathcal D'$ the dual cell division. Let $C_1 (\mathcal D) = C_{1} (\mathcal D ,
\Z)$ and $C_1 (\mathcal D') = C_{1} (\mathcal D' , \Z)$ be the corresponding chain 
groups. Given two chains $c \in C_1 (\mathcal D)$ and $c' \in C_{1} (\mathcal D' )$ we 
denote by $\iota (c, c')$ the (integer) intersection number of $c$ and $c'$. This defines a 
bilinear form $\iota : C_1 (\mathcal D ) \times C_1 (\mathcal D' ) \to \Z $ which induces 
the usual intersection form on $H_1 (S)$ (or between the homology and the homology relative to the boundary in the annulus case). In that
way $C_1 (\mathcal D' )$ is canonically isomorphic to the dual of $C_1 (\mathcal D)$. 

\begin{figure}[ht]      
\begin{center}
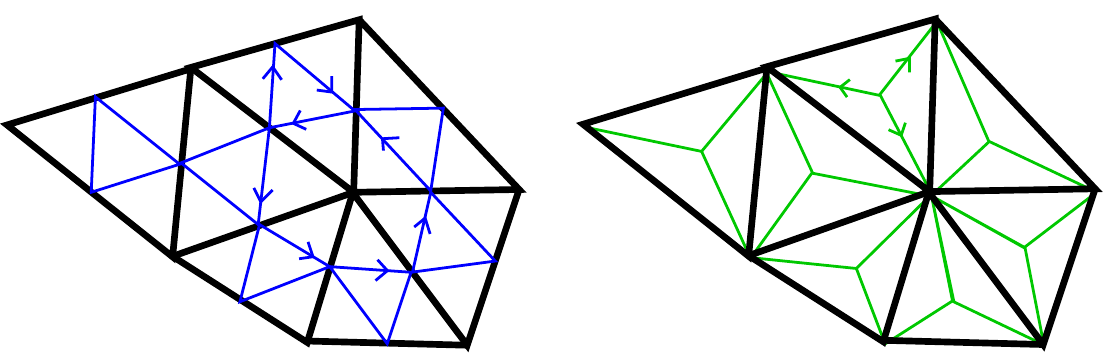
\caption{The two cell decompositions of the link} \label{fig:celldecomposition}
\end{center}
\end{figure}

\subsubsection{Goldman-Weil-Petersson form for tori} \label{ss:omegatori} Here we equip 
$$C_1 (\mathcal D , \R^{n-1}) = C_1 (\mathcal D ) \otimes \R^{n-1}$$
with the bilinear form $\omega$ defined by coupling the intersection form $\iota$ with the scalar product on $\R^{n-1}$ seen as the space of roots of $\mathfrak{sl}(n,\C)$ with its Killing form. We describe more precisely an integral version of this.

From now on we identify $\R^{n-1}$ with the subspace $V = \{ (x_m)_{1\leq m\leq n} \in \R^n \; : \; \sum_m x_m=0\}$ via 
the map sending the $m$-th vector $v_m$ of the canonical basis to $(0,\ldots,0,1,-1,0,\ldots,0)^t$, the entry $1$ being the $m$-th.

We let $L \subset V$ be the standard lattice in $V$ where all the coordinates are in $\Z$. We identify it with $\Z^{n-1}$ using the above basis of $V$.   
The restriction of the usual euclidean product of $\R^n$ gives a product, denoted $[,]$, on $V$ (the ``Killing form'')\footnote{In terms of roots of $\mathfrak{sl}(n)$, the choosen basis is the usual basis of positive simple roots.}. In other words, the matrix of the scalar product is the Cartan matrix: all entries are $0$, except the diagonal which is filled with $2$ and the upper and lower-diagonals, filled with $-1$.

Identifying $V$ with $V^*$ using the scalar product $[,]$, the dual lattice $L^* \subset V^*$ becomes a lattice $L'$ in $V$; an element $y \in V$ belongs to $L'$
if and only if $[x,y] \in \Z$ for every $x \in L$. 

We consider $C_1 (\mathcal D , L)$ and define $\omega = \iota \otimes [ \cdot , \cdot ] : C_1 (\mathcal D , L) \times C_1 (\mathcal D' , L') \rightarrow \Z$ by the formula
$$\omega \left( c \otimes l , c' \otimes l' \right) = \iota (c,c') \left[ l , l' \right].$$
This induces a (symplectic) bilinear form on $H_1 (S, \R^2)$\footnote{Or a coupling between the homology and the homology relative to the boundary in the annulus case.} which we still denote by $\omega$. 
Note that $\omega$ identifies $C_1 (\mathcal D' , L')$ with the dual of $C_1 (\mathcal D , L)$. 

\begin{remark}
The canonical coupling $C_1 (\mathcal D , L) \times C^1 (\mathcal D , L^*) \rightarrow \Z$ identifies $C_1 (\mathcal D , L)^*$ with $C^1 (\mathcal D , L^*)$. This 
last space is naturally equipped with the ``Goldman-Weil-Petersson'' form $\mathrm{wp}$, dual to $\omega$. Let $\langle , \rangle$ be the natural
scalar product on $V^*$ dual to $[,]$: letting $d:V\to V^*$ be the map defined by $d(v) = [v, \cdot ]$, we have
$\langle d (v) , d (v') \rangle = [v,v']$. 
On $H^1 (S , \R^2)$ the bilinear form $\textrm{wp}$ induces a symplectic form --- the usual Goldman-Weil-Petersson symplectic form --- formally defined as the coupling of the cup-product and the scalar product $\langle , \rangle$.
\end{remark}

\subsection{Peripheral holonomy}

To any decoration $z\in \C^{\times}\otimes (J^*\cap \mathrm{Ker}(F^*))$ we now explain how to associate an element 
$$\textstyle \Hol_\periph(z) \in \mathrm{Hom} (H_1(S , L), \C^{\times} ).$$
We may represent any class in $H_1 (S, L)$ by an element $c \otimes (x_1,\ldots,x_{n-1})^t$ in $C_1 (\mathcal D , L)$ where $c$ is a closed path in $S$ (seen as the link of the corresponding vertex in the complex $K$). 
Using the decoration $z$ we may compute the holonomy of the loop $c$, as explained in Section~\ref{ss:holonomy} (see also section \ref{ss:coordinatelinks}): it is  an upper triangular matrix. Let us write the diagonal part: $$(1,C_1,\ldots,\prod_1^{n-1}C_m).$$
 The application which maps 
$c \otimes (x_1,\ldots,x_{n-1})^t$ to $\prod_1^{n-1} C_m^{x_m}$ is the announced element $\Hol_\periph(z)$ of $\C^{\times}\otimes H^1 (S , L^* )$. 

In the case of an annulus, we obtain in the same way a map, still denoted $\Hol_\periph$, from $\C^\times\otimes (J^*)$ to $\C^{\times}\otimes (H^1 (S , L^* )\times H^1(S,\partial S,L^*)$.

The choice of a given longitude and meridian gives a basis of $H_1(S)$. It allows to identify $\C^{\times}\otimes H^1 (S , L^* )$ with $(\C^\times)^{2(n-1)}$. This explains our definition of coordinates in section \ref{ss:coordinatelinks}.

\subsubsection{Linearization of the holonomy elements} \label{ss:linearizationholonomy}

We now linearize the map $\Hol_\periph$, i.e. we explain how the computations of the eigenvalues of the holonomy of the torus may be done in our framework of $\Z$-modules.

We define the linear map $h: C_1 (\mathcal D , L) \to J^2$ on a basis. Let 
$c_{ij}^\mu$ be the edge turning left around the edge $(ij)$ in the tetrahedron $T_{\mu} = (ijkl)$, see Figure \ref{fig:h}. For $1\leq m \leq n-1$, let $c_{ij}^\mu(m)$ be the tensor of $c_{ij}^\mu$ with the $m$-th canonical basis vector $v_m$ of $L\simeq \Z^{n-1}$. Parametrize the points of $I_{T_\mu}$ in the two faces containing the edge $ij$: $\alpha^m_l$ (for $1\leq m\leq n-1$ and $0\leq l\leq n-m)$ is the point at the level $m$ from the face $jlk$ at the position $l$ (counted algebraically and rightward from the point on the edge $ij$) -- see figure \ref{fig:h}. Then we define:
\begin{equation} \label{eq:maph}
h \left( c_{ij}^\mu(m) \right) = 2\sum_{2l<n-m} e_{\alpha^m_l}+\sum_{2l=\pm (n-m)}e_{\alpha^m_l} .
\end{equation}
Remark that the second sum is empty for $n-k$ odd.
 
\begin{figure}[ht]      
\begin{center}
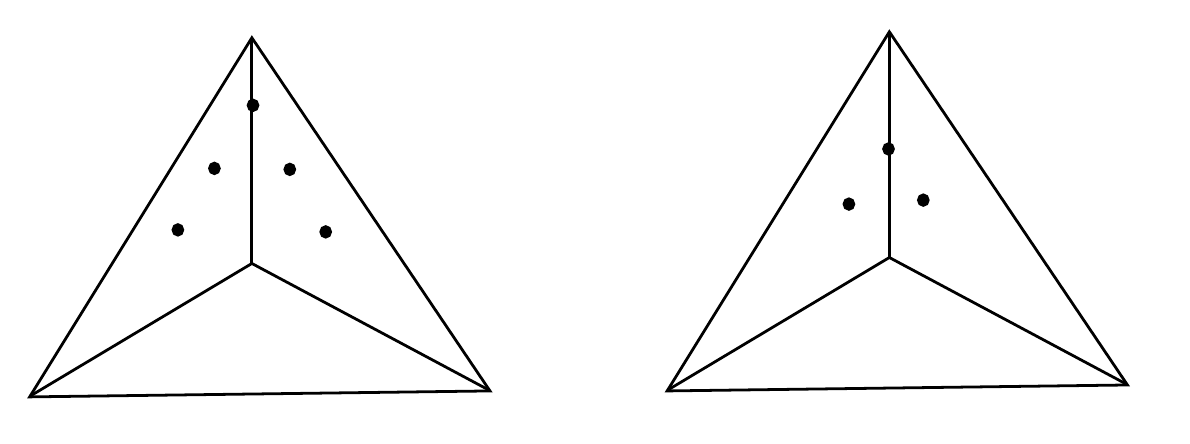
\caption{The map $h$} \label{fig:h}
\end{center}
\end{figure}

The claim that $h$ linearizes $\Hol_\periph$ is given by the following:
\begin{lemma}\label{lem:Holperiph}
Let $z \in k^{\times}\otimes (J^*\cap \mathrm{Ker}(F^*))$. Seeing $z$ as an element of $\mathrm{Hom} (J^2 , C^{\times})$,
we have:
$$z \circ h = \Hol_\periph(z)^2.$$
\end{lemma}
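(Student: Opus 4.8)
The plan is to reduce the statement to a single-generator computation by exploiting that both sides are multiplicative. Writing $z$ as a homomorphism $J^2\to\C^\times$, the $\Z$-linearity of $h$ turns the additive structure of $C_1(\mathcal D,L)$ into products, so $z\circ h$ is a homomorphism on $C_1(\mathcal D,L)$. On the other side, the peripheral holonomy is a product of upper-triangular matrices (Section~\ref{ss:holonomy}), and the diagonal of a product of upper-triangular matrices is the product of the diagonals; relative to the fixed snake frames at the graph vertices, reading off the $m$-th diagonal ratio is therefore a well-defined homomorphism at the level of chains, whose restriction to cycles is $\Hol_\periph(z)$. Thus both $z\circ h$ and $\Hol_\periph(z)^2$ are homomorphisms on $C_1(\mathcal D,L)$, and since $h$ is defined on the basis $c_{ij}^\mu(m)=c_{ij}^\mu\otimes v_m$, it suffices to prove $z\circ h\bigl(c_{ij}^\mu(m)\bigr)=C_m^2$ for each such generator, where $(1,C_1,C_1C_2,\dots)$ is the diagonal of the step matrix of the left turn.

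Next I would carry out the local computation. The edge $c_{ij}^\mu$ is the left turn around $ij$ in $T_\mu$, so its diagonal contribution is extracted from the matrices $L_{ij}(z)$ and $R_{ij}(z)$ of Section~\ref{ss:holonomy}: $L_{ij}(z)$ contributes the factor $z_{ij(m)}$ at level $m$, while the relevant base change brings in the products $Z_{ij(m)}$, which by definition collect the $z_\alpha$ at level $m$ in the faces adjacent to $ij$. Combining these, the $m$-th ratio $C_m$ is a monomial in the coordinates $z_{\alpha_l^m}$ as $\alpha_l^m$ runs over the level-$m$ points in the two faces containing $ij$. The task is then to verify that $C_m^2$ coincides with $z\bigl(h(c_{ij}^\mu(m))\bigr)=\prod_{2l<n-m}z_{\alpha_l^m}^2\cdot\prod_{2l=\pm(n-m)}z_{\alpha_l^m}$ read off from \eqref{eq:maph}, point by point and with the correct multiplicities.

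I would also explain why the statement is intrinsically about the \emph{square}. As in the Neumann--Zagier picture, the logarithm of a holonomy eigenvalue is a \emph{half}-integer combination of the $\log z_\alpha$; clearing this $\tfrac12$ is exactly passing to $C_m^2$, which is the honest integral monomial that $h$ computes. This same squaring simultaneously kills the sign ambiguities $\pm$ carried by $R_{ij}(z)$ and the square-root ambiguity of the even-$n$ edge coordinates $a_\alpha$ (fixed only up to the ``less weighted first'' convention), so that the integral identity $z\circ h=\Hol_\periph(z)^2$ is clean even though $C_m$ itself is not. The coefficient $2$ on interior points versus $1$ on the two extreme points $2l=\pm(n-m)$ reflects the Cartan matrix governing the simple-root basis $v_m$ of $L$ (diagonal $2$, and the $\pm(n-m)$ points sitting on the shared edge $ij$ being counted once), consistently with the Killing-form normalization used for $\omega$ and $\mathrm{wp}$.

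The main obstacle is precisely the combinatorial bookkeeping of the second step: reconciling the monomial produced by the diagonal ratios of $L_{ij}$ and the gluing/base change (through the products $Z_{ij(m)}$) with the weighted sum \eqref{eq:maph}, while tracking the ``rightward, algebraic'' indexing of the $\alpha_l^m$ against the orientation conventions of the left turn. Because the sign subtleties of $R_{ij}$ and the even-$n$ convention must be controlled at the same time as the multiplicities, the cleanest route is to keep everything inside the square from the outset rather than attempt to pin down $C_m$ on the nose.
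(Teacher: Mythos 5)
There is a genuine gap: your reduction to single generators is unsound, and the generator-level identity you propose to verify is false. First, the ``$m$-th diagonal ratio of the step matrices'' is not a well-defined homomorphism on $C_1(\mathcal D,L)$. The generator $c_{ij}^\mu$ traversed forward (left turn) has step matrix $L_{ij}(z)$, with diagonal ratio $z_{ij(m)}$, whereas traversed backward (right turn) the step matrix is $R_{ij}(z)$, with diagonal ratio $\pm Z_{ij(m)}^{-1}$; since $Z_{ij(m)}$ is a product over many face coordinates and is not equal to $\pm z_{ij(m)}$, the backward contribution is not the inverse of the forward one. So ``reading off the diagonal'' depends on how the chain is traversed, not only on the chain, and your multiplicative extension does not restrict to $\Hol_\periph(z)$ on cycles: for a cycle $c=\sum c_{ij}^\mu-\sum c_{ij}^{\mu'}$ the true diagonal is $C_m=\pm\prod z_{ij}^\mu(m)/\prod Z_{ij}^{\mu'}(m)$, with $Z$'s, not $z$'s, in the denominator. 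Second, even on a single left-turn generator the identity fails: by \eqref{eq:maph}, $z\circ h\bigl(c_{ij}^\mu(m)\bigr)$ is the product of the squares of \emph{all} level-$m$ coordinates on the two faces adjacent to $ij$ (the extreme points entering to the first power), whereas the square of the left-turn diagonal ratio is $z_{ij(m)}^2$, involving only the edge coordinate. The discrepancy consists of face coordinates, and nothing makes it vanish for a single edge.

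Symptomatically, your argument never uses the hypothesis $z\in\mathrm{Ker}(F^*)$, and that is exactly where the content of the lemma lies. The identity $z\circ h=\Hol_\periph(z)^2$ is a cycle-level statement: the paper's proof takes a closed loop, writes both $C_m$ and $z\circ h(c\otimes v_m)$ as products over the whole cycle, and then invokes the face equations $z_\alpha z_{\alpha'}=1$ to cancel, pairwise along the cycle, all coordinates on internal glued faces --- in $C_m$ these arise from products $Z_{ij}^{\mu'}(m)Z_{ij}^{\eta'}(m)$ of consecutive right turns, and in $z\circ h$ from the weight-$2$ terms of $h$ on consecutive edges. Only after this cancellation do both sides reduce to comparable expressions (edge coordinates together with the face coordinates at the faces where $c$ changes direction), at which point the squaring matches them. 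Any correct proof must exploit this cycle-wide cancellation; a local, edge-by-edge verification of the kind you propose cannot succeed, because the quantity being computed is simply not local in the chain.
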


The proof of this lemma goes along exactly the same lines as \cite[Lemma 7.2.1]{BFG}. It is 
a lengthy inspection, whose major difficulty is to define reasonable notations. We postpone 
it until the last section.

Let $h^* : (J^2)^* \to C_1 (\mathcal D , L)^*$ be the map dual to $h$. Note that for any $e \in J^2$ and $c \in C_1(\mathcal D,L)$ we have
\begin{equation} \label{h*}
(h^* \circ p (e))( c) = p(e) (h( c)) = \Omega^2 (e , h( c)).
\end{equation}
Now composing $p$ with $h^*$ and identifying $C_1 (\mathcal D , L)^*$ with $C_1 ( \mathcal D' , L')$ using $\omega$ we get a map
\begin{eqnarray} \label{eq:mapg}
g : J^2 \rightarrow C_1 (\mathcal D' , L')
\end{eqnarray}
and it follows from equation \eqref{h*} that for any $e \in J^2$ and $c \in C_1(\mathcal D,L)$ we have
\begin{equation} \label{g}
\omega (c, g (e)) = \Omega^2 (e , h(c )).
\end{equation}

In the following we let $C_1 (\partial M , L)$ and $C_1 (\partial M ' , L')$
be the orthogonal sum of the $C_1 (\mathcal D , L)$'s and $C_1 (\mathcal D' , L')$'s for each torus or annulus link $S$. We abusively denote by $h : C_1(\partial M) \to J^2$
and $g: J^2 \to C_1 (\partial M' , L')$ the product of the maps defined above on each $T$.

\section{Homologies and symplectic isomorphism in the closed case}

Le us first assume that $K$ is a closed complex. In that case all links are tori. We will come back latter on the general case.

\subsection{Homologies in the closed case}

We defined the homology groups $\mathcal H(J)$ and $\mathcal H(J^*)$ and the chain groups of 
the simplicial decomposition. We claim here that $h$ induces well defined map in homology. This will allow to state our main technical theorem in the closed case.

Let $Z_1 (\mathcal D , L)$ and $B_1 (\mathcal D, L)$ be the subspaces of cycles and boundaries in $C_1 (\mathcal D, L)$. The following lemma is easily checked by inspection.

\begin{lemma}
We have: 
$$h( Z_1 (\mathcal D, L)) \subset \mathrm{Ker} (F^* \circ p)$$
and 
$$h(B_1 (\mathcal D, L )) \subset \mathrm{Ker} (p) + \mathrm{Im} (F).$$
\end{lemma}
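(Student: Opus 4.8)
The two inclusions are local statements, and the plan is to establish them by inspecting the effect of $h$ on explicit generators, using the descriptions of $\mathrm{Im}(F)$ (the gluing vectors), of $\mathrm{Ker}(p)=\mathrm{Ker}(\Omega^2)$ (spanned by the vectors $v_i(m)$) and of the $2$-cells of $\mathcal D$. The organizing remark is that $\mathrm{Ker}(p)+\mathrm{Im}(F)\subset \mathrm{Ker}(F^*\circ p)$: indeed $p$ kills $\mathrm{Ker}(p)$ and $F^*\circ p\circ F=0$ by the first Lemma. Hence the second inclusion is stronger, and proving it first lets me reduce the first inclusion to homology.

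I would treat the boundaries first. The surface $S$ (link of a vertex $i$) is triangulated by the corner triangles of the tetrahedra abutting $i$; read in the first barycentric subdivision, the edge $c_{ij}^\mu$ cuts off the corner of the triangle $f$ (corner of $T_\mu$ at $i$) at the vertex $p_{ij}$ corresponding to the edge $(ij)$ of $K$. Consequently the $2$-cells of $\mathcal D$ come in two families, and $B_1(\mathcal D,L)$ is generated by their boundaries tensored with $L$:
\begin{itemize}
\item the cell around a vertex $p_{ij}$, whose boundary is $\sum_\gamma c_{ij}^{T_\gamma}$, the sum running over all tetrahedra abutting the edge $(ij)$ of $K$ (the loop encircling that edge);
\item the central cell of each corner triangle $f$ (corner of $T=(ijkl)$ at $i$), whose boundary is $c_{ij}^{T}+c_{ik}^{T}+c_{il}^{T}$.
\end{itemize}
For the first family I would feed $\sum_\gamma c_{ij}^{T_\gamma}(m)$ into the defining formula \eqref{eq:maph}: the points carried by $h$ on the shared faces, respectively on the edge $(ij)$, assemble across the successive tetrahedra exactly into the face, respectively edge, gluing vectors $F(e_\bullet)$, so the image lies in $\mathrm{Im}(F)$. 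For the second family the three contributions $h(c_{ij}^T)$, $h(c_{ik}^T)$, $h(c_{il}^T)$ are supported inside the single tetrahedron $T$ on its three faces through $i$, and I would check that, level by level, they add up (after correcting the face terms by gluing vectors of $\mathrm{Im}(F)$ where needed) to the kernel vectors $v_i(m)$. This gives $h(B_1(\mathcal D,L))\subset \mathrm{Ker}(p)+\mathrm{Im}(F)$. As a consistency check: each such boundary is a contractible loop in $S$, so its peripheral holonomy is trivial, and by Lemma \ref{lem:Holperiph} this forces $z(h(c))=1$ for every $z\in \mathrm{Ker}(F^*)\cap\mathrm{Im}(p)$, whose annihilator in $J^2$ is precisely $\mathrm{Ker}(p)+\mathrm{Im}(F)$.

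For the cycles I would use the previous step. Since $\mathrm{Ker}(p)+\mathrm{Im}(F)\subset \mathrm{Ker}(F^*\circ p)$ and the latter is a subgroup, and since $Z_1(\mathcal D,L)=B_1(\mathcal D,L)+\langle \ell_1,\ell_2\rangle\otimes L$, where $\ell_1,\ell_2$ are honest simplicial closed loops (built from the $c_{ij}^\mu$) representing a basis of $H_1(S)$ for the torus $S$, it suffices to verify $F^*\circ p\circ h(\ell_r\otimes v_m)=0$ for $r=1,2$. Writing the $e_\alpha$-component of $F^*\circ p\circ h(c)$ as $\Omega^2(F(e_\alpha),h(c))$, I would check it vanishes at every interior vertex $\alpha\in I$: for $\alpha$ on a face one pairs against $e_\alpha(T_\mu)+e_\alpha(T_{\mu'})$, for $\alpha$ on an edge against $\sum_\gamma e_\alpha(T_\gamma)$, and in each case the signed neighbour-count $\epsilon_{\alpha\bullet}$ weighting the points carried by $h$ cancels because the closed loop $\ell_r$ enters and leaves the neighbourhood of $\alpha$ equally often. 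Equivalently, via \eqref{g} this reads $\omega(\ell_r,g(F e_\alpha))=0$, which holds because $g(F e_\alpha)$ is a boundary in $\mathcal D'$ and a cycle meets a boundary trivially.

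The routine but genuinely laborious part, and the main obstacle, is the bookkeeping in the boundary step: matching the coefficients $2$ and $1$ of \eqref{eq:maph} against the multiplicities with which the shared faces and the edge $(ij)$ occur as one turns around the edge, and tracking the sign ambiguity appearing on edge points when $n$ is even (the same subtlety that motivates the weight-$1$ term). This is exactly the inspection carried out for $n=3$ in \cite[Section 7.1]{BFG}, and the ubiquitous factor $2$ is the shadow of the square in Lemma \ref{lem:Holperiph}.
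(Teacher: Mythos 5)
Your proposal is correct, and it is in substance the inspection that the paper simply omits: the paper's entire proof is the sentence ``easily checked by inspection'' (following the $n=3$ model of \cite{BFG}), and your plan --- check the second inclusion on the two families of $2$-cells of $\mathcal D$ (vertex cells assembling into edge and face gluing vectors of $\mathrm{Im}(F)$, central cells giving kernel vectors $v_i(m)$, in fact $2v_i(m)$, so no $\mathrm{Im}(F)$ correction is even needed there), then deduce the first inclusion from $\mathrm{Ker}(p)+\mathrm{Im}(F)\subset\mathrm{Ker}(F^*\circ p)$ plus a check on homology representatives --- is exactly that inspection, organized correctly.

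One genuine flaw to remove, though it is only an aside: the sentence ``Equivalently, via \eqref{g} this reads $\omega(\ell_r,g(Fe_\alpha))=0$, which holds because $g(Fe_\alpha)$ is a boundary in $\mathcal D'$'' is circular. Under the duality $B_1(\mathcal D',L')=Z_1(\mathcal D,L)^{\perp_\omega}$, the assertion that $g(Fe_\alpha)\in B_1(\mathcal D',L')$ for every $\alpha\in I$ is \emph{equivalent} to the first inclusion you are proving; indeed, in the paper the statement $g(\mathrm{Ker}(p)+\mathrm{Im}(F))\subset B_1(\mathcal D',L')$ is the content of the next lemma, whose proof invokes the present one. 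So it cannot serve as justification here. Your primary argument for the cycle case --- that the $e_\alpha$-component $\Omega^2(F(e_\alpha),h(\ell_r\otimes v_m))$ vanishes because the contributions from the two sides of a glued face carry opposite signs (the matching $\Phi$ reverses orientation) and the closed loop satisfies the flow condition at each vertex of $\mathcal D$ --- is the right mechanism and does suffice; keep only that one. The remaining caveat you flag yourself: the coefficients in \eqref{eq:maph} must be taken in their symmetric reading (the weight of a face point seen from either adjacent tetrahedron must agree), which is what both the vertex-cell cancellation and the face-equation cancellations in Lemma \ref{lem:Holperiph} require.
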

In particular $h$ induces a map $\bar{h} : H_1 (\mathcal D , L) \to \mathcal{H} (J)$ in homology. By duality, the map $g$ induces a map 
$\bar{g} : \mathcal{H} (J) \to H_1 (\mathcal D' , L')$ as follows from:
\begin{lemma}
We have:
$$g(\mathrm{Ker}(F^* \circ p))\subset Z_1(\mathcal D',L'),$$
and
$$g(\mathrm{Ker} (p) + \mathrm{Im} (F))\subset B_1(\mathcal D',L').$$
\end{lemma}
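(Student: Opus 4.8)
The plan is to prove the two inclusions by duality from the previous lemma, rather than by a direct inspection. Indeed, the statement to prove is exactly the dual of the inclusions
$$h(Z_1(\mathcal D,L))\subset \mathrm{Ker}(F^*\circ p)\quad\text{and}\quad h(B_1(\mathcal D,L))\subset \mathrm{Ker}(p)+\mathrm{Im}(F),$$
once one observes the orthogonality relations that tie the various maps together. The key identity is equation \eqref{g}, namely $\omega(c,g(e))=\Omega^2(e,h(c))$ for all $c\in C_1(\mathcal D,L)$ and $e\in J^2$, together with the fact that $\omega$ and $\Omega^2$ identify each module with (a quotient/subspace of) its dual. So the structural point is: the map $g$ is, up to these identifications, the adjoint of $h$ with respect to the pairings $\Omega^2$ and $\omega$.

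First I would establish the first inclusion, $g(\mathrm{Ker}(F^*\circ p))\subset Z_1(\mathcal D',L')$. By Poincaré duality on the torus $S$, a chain $g(e)\in C_1(\mathcal D',L')$ is a cycle if and only if it pairs to zero (under $\omega$) with every boundary in $C_1(\mathcal D,L)$; equivalently $g(e)\in Z_1(\mathcal D',L')$ iff $\omega(b,g(e))=0$ for all $b\in B_1(\mathcal D,L)$. Using \eqref{g} this reads $\Omega^2(e,h(b))=0$ for all $b\in B_1$. Now by the previous lemma $h(B_1(\mathcal D,L))\subset \mathrm{Ker}(p)+\mathrm{Im}(F)$, so it suffices to check that $e\in\mathrm{Ker}(F^*\circ p)$ is $\Omega^2$-orthogonal to both $\mathrm{Ker}(p)$ and $\mathrm{Im}(F)$. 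Orthogonality to $\mathrm{Ker}(p)=\ker(\Omega^2)$ is automatic; orthogonality to $\mathrm{Im}(F)$ is precisely the statement $\Omega^2(e,F(x))=(F^*\circ p(e))(x)=0$, which holds because $e\in\mathrm{Ker}(F^*\circ p)$. This closes the first inclusion.

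For the second inclusion, $g(\mathrm{Ker}(p)+\mathrm{Im}(F))\subset B_1(\mathcal D',L')$, I would argue dually once more: a cycle $g(e)$ is a boundary iff it pairs to zero with every cycle $c\in Z_1(\mathcal D,L)$ under $\omega$ (again Poincaré duality, $B_1=Z_1^{\perp}$ inside $C_1(\mathcal D',L')$ under the perfect pairing $\omega$). By \eqref{g} this amounts to $\Omega^2(e,h(c))=0$ for all $c\in Z_1(\mathcal D,L)$. But the previous lemma gives $h(Z_1(\mathcal D,L))\subset \mathrm{Ker}(F^*\circ p)$, and one checks that every $e\in\mathrm{Ker}(p)+\mathrm{Im}(F)$ is $\Omega^2$-orthogonal to $\mathrm{Ker}(F^*\circ p)$: for $e\in\mathrm{Ker}(p)$ this is automatic, and for $e=F(y)$ it reads $\Omega^2(F(y),h(c))=(F^*\circ p\,(h(c)))(y)$, which vanishes since $h(c)\in\mathrm{Ker}(F^*\circ p)$. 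This gives the claim.

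The main obstacle, and the reason one must be a little careful, is the passage between a module and its dual: $g$ lands in $C_1(\mathcal D',L')$, and the identifications $C_1(\mathcal D,L)^*\cong C_1(\mathcal D',L')$ (via $\omega$, i.e.\ the intersection form) and $J^*\cong J^2/\ker(\Omega^2)$ must be kept consistent so that ``$g(e)$ is a cycle/boundary'' really does translate into the orthogonality statements above. Concretely, the fact that $Z_1^\perp=B_1$ and $B_1^\perp=Z_1$ under $\omega$ is where Poincaré duality of the surface $S$ enters, and it is exactly the dual of the geometric content already used in the previous lemma. Once these identifications are pinned down, both inclusions are formal consequences of \eqref{g} and the preceding lemma, so no genuinely new computation is required — which is why the previous lemma is stated as ``easily checked by inspection'' and this one as a corollary ``by duality''.
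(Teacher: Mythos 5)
Your proof is correct and follows essentially the same route as the paper: the paper likewise characterizes $Z_1(\mathcal D',L')$ as the $\omega$-orthogonal of $B_1(\mathcal D,L)$, transfers via equation \eqref{g} to the condition $\Omega^2\bigl(h(B_1(\mathcal D,L)),e\bigr)=0$, and invokes the previous lemma, leaving the second inclusion as ``similar.'' The only difference is that you make explicit the orthogonality checks (that $\mathrm{Ker}(F^*\circ p)$ is $\Omega^2$-orthogonal to $\mathrm{Ker}(p)+\mathrm{Im}(F)$, and symmetrically), which the paper leaves implicit.
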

\begin{proof}
First of all, $Z_1(\mathcal D',L')$ is the orthogonal of $B_1(\mathcal D,L)$ for the coupling $\omega$. Moreover, by definition of $g$, if $e\in\textrm{Ker} (F^* \circ p)$, we have: 
\begin{eqnarray*}
g(e)\in Z_1(\mathcal D',L') & \Leftrightarrow & \omega(B_1(\mathcal D,L),g(e))=0\\
& \Leftrightarrow & \Omega^2(h(B_1(\mathcal D,L)),e)=0.
\end{eqnarray*}
The last condition is given by the previous lemma. The second point is similar.
\end{proof}
Note that $H_1 (\mathcal D , L)$ and $H_1 (\mathcal D' , L')$ are canonically isomorphic so that we identified them (to $H_1 (\partial M , L)$) 
in the following. 
\begin{theorem} \label{thm:homologies}
\begin{enumerate}
\item The map $\bar{g} \circ \bar{h} : H_1 (\partial M , L ) \to H_1 (\partial M , L)$ is multiplication by~$4$.
\item Given $e \in \mathcal{H} (J)$ and $c \in H_1 (\partial M, L)$, we have 
$$\omega (c, \bar{g} (e)) = \Omega (e , \bar{h} ( c)).$$
\end{enumerate}
\end{theorem}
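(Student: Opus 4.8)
The plan is to prove both statements by reducing everything to the relations \eqref{g} and Lemma~\ref{lem:Holperiph} at the chain level, then checking that the descent to homology is automatic given the previous two lemmas. For part (2), the formula $\omega(c,\bar g(e))=\Omega(e,\bar h(c))$ is nothing but equation \eqref{g} read in homology: by definition of $g$ we have $\omega(c,g(e))=\Omega^2(e,h(c))$ for \emph{every} chain $c\in C_1(\mathcal D,L)$ and every $e\in J^2$. First I would take $e\in\mathrm{Ker}(F^*\circ p)$ representing a class in $\mathcal H(J)$ and $c\in Z_1(\mathcal D,L)$ representing a class in $H_1(\partial M,L)$; the only thing to verify is that the right-hand side $\Omega^2(e,h(c))$ descends, i.e. does not depend on the choices of representatives. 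Changing $c$ by a boundary in $B_1(\mathcal D,L)$ moves $h(c)$ into $\mathrm{Ker}(p)+\mathrm{Im}(F)$ (first lemma of this subsection), and since $e\in\mathrm{Ker}(F^*\circ p)$ one computes that $\Omega^2(e,\cdot)$ kills both $\mathrm{Ker}(p)$ (trivially, as $\Omega^2(e,\ker p)=0$ once $e$ is paired through $p$) and $\mathrm{Im}(F)$ (because $\Omega^2(e,F(x))=p(e)(F(x))=(F^*p(e))(x)=0$). Changing $e$ by an element of $\mathrm{Ker}(p)+\mathrm{Im}(F)$ is handled symmetrically using that $h(c)\in\mathrm{Ker}(F^*\circ p)$ for $c$ a cycle. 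Thus both sides depend only on the homology classes and the identity passes to the quotient, giving part (2).

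For part (1), the strategy is to combine part (2) with the nondegeneracy of $\omega$ and the defining relation $z\circ h=\Hol_\periph(z)^2$. The cleanest route is to identify $\bar g\circ\bar h$ through its pairing against $\omega$: for any cycles $c,c'\in H_1(\partial M,L)$ I would compute $\omega(c',\bar g\bar h(c))$. Applying part (2) with $e=\bar h(c)\in\mathcal H(J)$ gives $\omega(c',\bar g(\bar h(c)))=\Omega(\bar h(c),\bar h(c'))$, so everything reduces to evaluating the quadratic form $\Omega(\bar h(c),\bar h(c'))=\Omega^2(h(c),h(c'))$ at the chain level. The key computation is therefore purely combinatorial: I must show that $\Omega^2(h(c),h(c'))=4\,\omega(c,c')$ for all cycles $c,c'$. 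By bilinearity it suffices to check this on the generating edges $c_{ij}^\mu(m)$, where $\Omega^2(h(c_{ij}^\mu(m)),h(c_{i'j'}^{\mu'}(m')))$ is an explicit signed count of oriented edges among the weighted vertices appearing in the definition \eqref{eq:maph} of $h$, and $\omega(c_{ij}^\mu(m),c_{i'j'}^{\mu'}(m'))=\iota(c_{ij}^\mu,c_{i'j'}^{\mu'})[v_m,v_{m'}]$ is an intersection number times a Cartan-matrix entry.

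The main obstacle is precisely this last combinatorial identity, i.e. the appearance of the factor $4$ and the matching of the Killing-form entries $[v_m,v_{m'}]\in\{2,-1,0\}$ with the edge-counts coming from the weighting $2\sum_{2l<n-m}e_{\alpha^m_l}+\sum_{2l=\pm(n-m)}e_{\alpha^m_l}$. The factor $2$ in the weights is squared into a global $4$, which is why $\Hol_\periph$ enters as $\Hol_\periph^2$ in Lemma~\ref{lem:Holperiph} and why $\bar g\circ\bar h$ is multiplication by $4$ rather than the identity. I expect to organize the verification by cases according to whether two edges $c_{ij}^\mu$ and $c_{i'j'}^{\mu'}$ lie in the same tetrahedron and share a vertex, an edge, or are disjoint, exactly paralleling \cite[Section~7]{BFG} for $n=3$; the subtlety for general $n$ is bookkeeping the diagonal coupling coming from adjacent levels $m,m\pm1$ which produces the off-diagonal $-1$ of the Cartan matrix, together with the symmetry that forces the antisymmetric form $\Omega^2$ to reproduce the symmetric intersection pairing after summing over the two faces adjacent to the edge. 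Once the chain-level identity $\Omega^2(h(c),h(c'))=4\omega(c,c')$ is established, nondegeneracy of $\omega$ on $H_1(\partial M,L)$ forces $\bar g\circ\bar h=4\cdot\mathrm{Id}$, completing part (1).
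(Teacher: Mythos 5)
Your part (2) is correct and is essentially the paper's argument: the identity is just equation \eqref{g} at the chain level, and the representative-independence you check is exactly the content of the two descent lemmas preceding the theorem. Your reduction of part (1) is also a legitimate reorganization: given part (2) and nondegeneracy of $\omega$ on $H_1(\partial M,L)$, the statement $\bar g\circ\bar h=4\,\mathrm{Id}$ is equivalent to a formula for the pullback form $\bar h^*\Omega$; in effect you propose to prove Corollary~\ref{corhom} first and deduce the theorem, whereas the paper deduces that corollary from the theorem. But watch the sign: since $\omega(c',\bar g\bar h(c))=\Omega(\bar h(c),\bar h(c'))$ and $\omega$ is antisymmetric, the identity you would need is $\Omega(\bar h(c),\bar h(c'))=4\omega(c',c)=-4\omega(c,c')$ (this is the $\bar h^*\Omega=-4\omega$ of Corollary~\ref{corhom}); the identity as you wrote it, $\Omega^2(h(c),h(c'))=+4\omega(c,c')$, would yield $\bar g\circ\bar h=-4\,\mathrm{Id}$.

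The genuine gap is in how you plan to establish that identity. You cannot check it ``on the generating edges $c_{ij}^\mu(m)$ by bilinearity'': the generators are not cycles, and the pairing $\iota$ is only defined between chains of $\mathcal D$ and chains of the dual complex $\mathcal D'$ (or on homology), so the quantity $\iota(c_{ij}^\mu,c_{i'j'}^{\mu'})$ for two edges of the \emph{same} complex $\mathcal D$ has no chain-level meaning. The right-hand side of your identity exists only on homology classes, and no local case analysis (same tetrahedron, shared vertex, shared edge, disjoint) can produce it, because the intersection number is a global invariant of the boundary surface. This is precisely where the paper's proof has its key step: it computes $g\circ h(c_{ij}\otimes v)$ in a single tetrahedron as an explicit chain $2(c_{ik}'-c_{il}')\otimes v+2(c_{ki}'-c_{kj}'+c_{jl}'-c_{jk}'+c_{lj}'-c_{li}')\otimes v'$ in $\mathcal D'$, where pairing against $\mathcal D$-cycles \emph{is} chain-level defined, and then invokes the homological ``miracle'' lemma (\cite[Lemma 8.1.1]{BFG}, going back to \cite[Lemma 4.3]{Neumann}): for a cycle $c=\sum_\mu c_{ij}^\mu$, the path $\sum_\mu c_{ik}'-c_{il}'$ is homologous to $2c$, while the cross-term path $\sum_\mu c_{ki}'-c_{kj}'+c_{jl}'-c_{jk}'+c_{lj}'-c_{li}'$ is null-homologous. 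These two global facts are the actual source of the factor $4$ and of the disappearance of the auxiliary vector $v'$. Your local computations (the values $2[v_m,v_{m'}]$, the Cartan-matrix coupling, cancellations via $\ker(\Omega^2)$) reproduce the first half of the paper's proof, but the step you dismiss as bookkeeping --- ``the symmetry that forces $\Omega^2$ to reproduce the intersection pairing'' --- is exactly the nontrivial homological input, and without it the argument does not close.
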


As a corollary, one understands the homology of the various complexes.

\begin{cor} \label{corhom}
The map $\bar h$ induces an isomorphism from $H_1 (\partial M, L)  $ to $\mathcal{H} (J)$. Moreover we have $\bar h^* \Omega = - 4 \omega$.
\end{cor}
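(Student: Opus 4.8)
The plan is to extract both assertions of Corollary~\ref{corhom} from Theorem~\ref{thm:homologies} by elementary symplectic linear algebra, working over $\mathbf{Q}$ (or $\mathbf{C}$) so that the factor $4$ is invertible; this is the level at which the word \emph{isomorphism} is meant and the one relevant for the character variety. I would first dispose of the identity $\bar h^{*}\Omega=-4\omega$, which is purely formal. For $c,c'\in H_1(\partial M,L)$, Theorem~\ref{thm:homologies}(2) applied with $e=\bar h(c')$ gives
$$\omega\bigl(c,\bar g(\bar h(c'))\bigr)=\Omega\bigl(\bar h(c'),\bar h(c)\bigr).$$
By Theorem~\ref{thm:homologies}(1) the left-hand side is $\omega(c,4c')=4\,\omega(c,c')$, and the right-hand side equals $-\Omega(\bar h(c),\bar h(c'))$ by antisymmetry; hence $\Omega(\bar h(c),\bar h(c'))=-4\,\omega(c,c')$, which is exactly $\bar h^{*}\Omega=-4\omega$.

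For the isomorphism, injectivity of $\bar h$ is immediate from $\bar g\circ\bar h=4\,\mathrm{id}$, which displays $\tfrac14\bar g$ as a left inverse and splits $\mathcal{H}(J)=\mathrm{Im}(\bar h)\oplus\mathrm{Ker}(\bar g)$. To control the second summand I would first note that $\Omega$ is genuinely symplectic on $\mathcal{H}(J)$: the identity $F^{*}\circ p\circ F=0$ shows that $\mathrm{Im}(F')$ is $\Omega$-isotropic, while \eqref{eq:ortho} gives $\mathrm{Ker}(G)=\mathrm{Im}(F')^{\perp_{\Omega}}$, so that $\mathcal{H}(J)=\mathrm{Im}(F')^{\perp_{\Omega}}/\mathrm{Im}(F')$ is a bona fide symplectic reduction of $(J,\Omega)$ and $\Omega$ descends to a nondegenerate form. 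The pairing identity of Theorem~\ref{thm:homologies}(2), together with the nondegeneracy of $\omega$, then identifies $\mathrm{Ker}(\bar g)$ with $\mathrm{Im}(\bar h)^{\perp_{\Omega}}$. Consequently $\bar h$ is onto precisely when $\dim_{\mathbf{Q}}\mathcal{H}(J)=\dim_{\mathbf{Q}}H_1(\partial M,L)=2(n-1)\nu_t$.

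This dimension equality is the one genuinely non-formal point, and I expect it to be handled exactly as in Neumann--Zagier and \cite[Section 7]{BFG}. Since \eqref{complexd} and \eqref{complex} are dual complexes, $\mathcal{H}(J)$ and $\mathcal{H}(J^{*})$ have equal dimension, and an Euler-characteristic count for the three-term complex $\Z^{I}\xrightarrow{F'}J\xrightarrow{G}\Z^{I}$ expresses $\dim\mathcal{H}(J)$ in terms of $\mathrm{Ker}(F')$ and $\mathrm{coker}(G)$. These two groups are of a combinatorial--topological nature, encoding the vertices and edges of the induced triangulation of $\partial M$; computing them (equivalently, invoking the Dimofte--Gabella--Goncharov description of the reduced Neumann--Zagier space as having dimension $2(n-1)$ per cusp) yields the required equality. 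With surjectivity in hand, $\bar h$ is an isomorphism and the relation $\bar h^{*}\Omega=-4\omega$ already proved completes the corollary; the canonical identification $H_1(\mathcal{D},L)\cong H_1(\mathcal{D}',L')$ is what lets one pass freely between the source of $\bar h$ and the target of $\bar g$ throughout.
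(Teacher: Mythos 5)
Your proposal is correct and takes essentially the same route as the paper, whose proof of Corollary~\ref{corhom} is just a citation of \cite[Section 7.4]{BFG} with the dimension of $\mathcal{H}(J)$ adapted to $2(n-1)l$: injectivity from $\bar g\circ\bar h=4\,\mathrm{id}$, the identity $\bar h^{*}\Omega=-4\omega$ by the formal adjunction computation you give, and surjectivity from equality of dimensions. Your deferral of the count $\dim\mathcal{H}(J)=2(n-1)\nu_t$ to the Neumann--Zagier/BFG/DGG computations is exactly the deferral the paper itself makes, and your remark that the isomorphism should be read after inverting $4$ (over $\mathbf{Q}$ or $\C$) is a fair, if slightly more careful, reading of the intended statement.
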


\begin{cor}\label{corkab}
 The form $\Omega^*$ on $\C^\times \otimes (J^*\cap \mathrm{Ker}(F^*))$ is the pullback of $\mathrm{wp}$ on $H^1(\partial M,(\C^\times)^{n-1})$ by the map $\Hol_\periph$. 
\end{cor}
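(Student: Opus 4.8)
The plan is to reduce the multiplicative statement of the corollary to a linear identity between two antisymmetric bilinear forms on the homology group $\mathcal{H}(J^*)$, and then to read off that identity from the results of Section~6. First I would check that both sides descend to $\mathcal{H}(J^*)$. On the one hand, by \eqref{eq:ortho} the radical of $\Omega^*$ restricted to $J^*\cap\mathrm{Ker}(F^*)=\mathrm{Ker}((F')^*)$ contains $\mathrm{Im}(G^*)$, so $\Omega^*$ induces a nondegenerate form on $\mathcal{H}(J^*)$. On the other hand, writing Lemma~\ref{lem:Holperiph} as $\Hol_\periph(z)^2 = z\circ h$ and combining it with the inclusion $h(B_1(\mathcal D,L))\subset \mathrm{Ker}(p)+\mathrm{Im}(F)$ of Section~6, one verifies that $\Hol_\periph$ trivializes on $\C^\times\otimes\mathrm{Im}(G^*)$ and hence factors through $\C^\times\otimes\mathcal{H}(J^*)$. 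Since every constraint and every coordinate is multiplicative, I would then pass to logarithmic $2$-forms: both $\Omega^*$ and $\mathrm{wp}$ are the logarithmic forms attached to bilinear forms on the relevant lattices, and pull-back commutes with this construction, so it suffices to prove the corresponding identity of bilinear forms.

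Next I would linearize $\Hol_\periph$. Writing $\Hol_\periph(z)^2 = z\circ h = h^*(z)$ and taking logarithmic derivatives shows that, infinitesimally, $\Hol_\periph$ is $\tfrac12\,h^*$, where $h^*\colon (J^2)^*\to C_1(\mathcal D,L)^*$ is the transpose of $h$. Because $h$ induces $\bar h\colon H_1(\partial M, L)\to\mathcal{H}(J)$, its transpose descends to the transpose map $\bar h^{\vee}\colon \mathcal{H}(J)^*\to H^1(\partial M,L^*)$; under the dualities $\mathcal{H}(J^*)=\mathcal{H}(J)^*$ and $H^1(\partial M,L^*)=H_1(\partial M,L)^*$ this is exactly the descent of $\tfrac12\,h^*$. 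Hence, on $\mathcal{H}(J^*)$, the linearization of $\Hol_\periph$ is $\tfrac12\,\bar h^{\vee}$, and the squaring contributes a factor $\tfrac14$ to the pulled-back bilinear form, so that $\Hol_\periph^*\mathrm{wp} = \tfrac14\,(\bar h^{\vee})^*\mathrm{wp}$.

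It then remains to compute $(\bar h^{\vee})^*\mathrm{wp}$. Here I would use that $\mathrm{wp}$ is the form dual to $\omega$ and that $\Omega^*$ is dual to $\Omega$, so that the relation $\bar h^*\Omega = -4\,\omega$ of Corollary~\ref{corhom} (with $\bar h$ an isomorphism) dualizes. Transposing an isomorphism and passing to dual forms inverts the proportionality constant, so the factor relating $\omega$ to $\bar h^*\Omega$ reappears as a factor $4$ relating $(\bar h^{\vee})^*\mathrm{wp}$ to $\Omega^*$; with the sign conventions fixed for $\Omega^*$ (through $p(v)=\Omega^2(\cdot,v)$) and for $\mathrm{wp}$ (dual to $\omega$), one obtains $(\bar h^{\vee})^*\mathrm{wp} = 4\,\Omega^*$. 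Combining with the factor $\tfrac14$ from the previous step gives $\Hol_\periph^*\mathrm{wp} = \Omega^*$ on $\C^\times\otimes(J^*\cap\mathrm{Ker}(F^*))$, which is the claim. The same identity can alternatively be extracted directly from part~(2) of Theorem~\ref{thm:homologies}, $\omega(c,\bar g(e))=\Omega(e,\bar h(c))$, by expressing $\mathrm{wp}$ through $\bar g$.

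The conceptual heart of the argument is the cancellation of two independent factors of $4$: the $\tfrac14$ produced by the square in Lemma~\ref{lem:Holperiph} and the $4$ produced by dualizing the relation $\bar h^*\Omega=-4\,\omega$ of Corollary~\ref{corhom}. Accordingly, the only genuine difficulty I anticipate is bookkeeping: keeping the two dualizations (of $\Z$-modules and of forms) consistent, and tracking the signs coming from the antisymmetry of $\Omega^2$ and from the conventions for $\Omega^*$ and $\mathrm{wp}$, so that the final constant is exactly $+1$. The geometric inputs---that $\Hol_\periph$ and $\Omega^*$ both descend to $\mathcal{H}(J^*)$, and that $\bar h$ is a symplectic isomorphism up to scale---are already furnished by Section~6.
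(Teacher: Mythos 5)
Your route is in substance the paper's own: the paper proves this corollary by the argument of \cite[Section 7.4]{BFG}, i.e.\ precisely by descending to the homology groups, observing that Lemma~\ref{lem:Holperiph} linearizes $\Hol_\periph$ as $\tfrac12 h^\vee$ (so that pulled-back bilinear forms pick up a factor $\tfrac14$), and cancelling this against the factor $4$ coming from Theorem~\ref{thm:homologies} and Corollary~\ref{corhom}. Your preliminary reductions (both $\Omega^*$ and $\Hol_\periph$ descend to $\mathcal H(J^*)$, via \eqref{eq:ortho} and via $h(B_1(\mathcal D,L))\subset \mathrm{Ker}(p)+\mathrm{Im}(F)$) are correct.

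However, the justification of your key step is wrong. The general principle you invoke --- ``transposing an isomorphism and passing to dual forms \emph{inverts} the proportionality constant'' --- is false: if $f\colon V\to W$ is an isomorphism of lattices (or vector spaces) with $f^*B_W=\lambda B_V$, then the transpose $f^T\colon W^*\to V^*$ satisfies $(f^T)^*(B_V^\vee)=\lambda\, B_W^\vee$, with the \emph{same} constant $\lambda$, not $\lambda^{-1}$ (test $f=2\,\mathrm{id}$, $B_V=B_W$: then $\lambda=4$ and pulling back the dual form along $f^T=2\,\mathrm{id}$ again multiplies it by $4$; inversion occurs only if one pulls back along the inverse transpose $(f^T)^{-1}$, which is not what $\Hol_\periph$ does). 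Had you applied your stated principle consistently to $\bar h^*\Omega=-4\omega$, you would have obtained $(\bar h^\vee)^*\mathrm{wp}=\pm\tfrac14\,\Omega^*$ and hence $\Hol_\periph^*\mathrm{wp}=\pm\tfrac1{16}\,\Omega^*$, and the proof would collapse; the factor $4$ you actually plug in is the right magnitude precisely because the constant is \emph{preserved}, not inverted. The clean derivation, implicit in the paper's reference to \cite[Section 7.4]{BFG}, uses both parts of Theorem~\ref{thm:homologies}: writing $\xi=\Omega(\cdot,e)\in\mathcal H(J^*)$, part (2) gives $\bar h^\vee\xi=-\,\omega(\cdot,\bar g(e))$, hence $\mathrm{wp}(\bar h^\vee\xi,\bar h^\vee\xi')=\omega(\bar g(e),\bar g(e'))$, and then part (1) together with Corollary~\ref{corhom} ($\bar g=4\bar h^{-1}$ and $\bar h^*\Omega=-4\omega$) yields $\omega(\bar g(e),\bar g(e'))=16\cdot(-\tfrac14)\,\Omega(e,e')=-4\,\Omega^*(\xi,\xi')$. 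Combined with the $\tfrac14$ from the square in Lemma~\ref{lem:Holperiph}, this gives $\Hol_\periph^*\mathrm{wp}=\pm\Omega^*$: the magnitude is forced, but the sign depends on conventions (the identification of $\mathcal H(J^*)$ with $\mathcal H(J)^*$, and of $\mathrm{wp}$ as the form dual to $\omega$) which neither your write-up nor the paper pins down; to claim the constant is exactly $+1$ you would have to fix these conventions explicitly.
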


Theorem \ref{th:main} in the closed case is exactly corollary \ref{corkab}.

The proofs of the corollaries from the theorem is given in \cite[Section 7.4]{BFG}. You just have to adapt the dimension of $\mathcal H(J)$ and $\mathcal H
(J^*)$: for $\PGL(n,\C)$, it is $2(n-1)l$ ($l$ is the number of tori links).

\subsection{Proof of theorem \ref{thm:homologies}}\label{ss:proofhomologies}

We want to compute $g\circ h$. Even if it seems simple, this is a point where a new 
approach was needed. In \cite{BFG}, this was dealt with a direct computation, but did not 
show how to generalize it.

\begin{lemma}

\end{lemma}

Let us first work in a single tetrahedron $T=ijkl$. We denote by $c_{ij}$ the edge of $
\mathcal D$ corresponding to a (left) turn around the edge
$ij$ and we denote by $c_{ij}'$ its dual edge in $\mathcal D'$, see Figure \ref{fig:celldecomposition}. We use the notation $c_{ij}(m)$ to denote the tensor product of $c_{ij}$ with the $m$-th canonical basis vector (still denoted $v_m$) of $L$.

\begin{lemma}
Then, for any vector $v\in L$, there exists a vector $v'$ in $L$ such that the image $g\circ h (c_{ij}\otimes v)$ decomposes as:
$$g\circ h (c_{ij}\otimes v)=2 (c_{ik}' - c_{il}') \otimes v +2 (c_{ki}' - c_{kj}' + c_{jl}' - c_{jk} ' + c_{lj}' - c_{li} ' ) \otimes v'.$$
\end{lemma}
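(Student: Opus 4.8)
The goal is to compute $g\circ h(c_{ij}\otimes v)$ in a single tetrahedron $T=ijkl$, which by equation~\eqref{g} amounts to understanding, for every test cycle $c\in C_1(\mathcal D,L)$, the pairing $\omega(c,g\circ h(c_{ij}\otimes v))=\Omega^2(h(c),h(c_{ij}\otimes v))$. So the strategy is to \emph{transport the computation to the combinatorial side}: rather than evaluate $g$ directly, I would compute the bilinear expression $\Omega^2(h(c_{kl}\otimes w),h(c_{ij}\otimes v))$ for all the relevant edges $c_{kl}$ turning around the various edges of $T$, and then read off $g\circ h(c_{ij}\otimes v)$ as the unique element of $C_1(\mathcal D',L')$ realizing these pairings (using that $\omega$ identifies $C_1(\mathcal D',L')$ with the dual of $C_1(\mathcal D,L)$).

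First I would fix coordinates on $I_T$ adapted to the edge $ij$, using exactly the parametrization $\alpha^m_l$ introduced before equation~\eqref{eq:maph}: at each level $m$ above the face $jlk$, the points are indexed by their algebraic horizontal position $l$, and $h(c_{ij}\otimes v_m)$ is the explicit weighted sum $2\sum_{2l<n-m}e_{\alpha^m_l}+\sum_{2l=\pm(n-m)}e_{\alpha^m_l}$. The key input is the structure of $\Omega^2$: it counts oriented edges of the $(n-1)$-triangulation, so $\Omega^2(e_\alpha,e_\beta)$ is nonzero only for \emph{neighboring} points $\alpha,\beta$. Thus when I expand $\Omega^2(h(c_{kl}\otimes w),h(c_{ij}\otimes v))$, almost all cross terms vanish and only the boundary of the weighted region defined by $h(c_{ij}\otimes v)$ contributes. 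The heart of the matter is a \textbf{discrete Stokes/telescoping phenomenon}: the weights $2$ in the interior and $1$ on the extreme diagonal are chosen precisely so that the interior contributions cancel in pairs along the oriented edges, leaving only terms supported near the boundary edges $ik$, $il$ and near the opposite edges $ki,kj,jl,jk,lj,li$. This is what produces the two groups of dual edges in the claimed formula, with the factor $2$ and the auxiliary vector $v'$ recording the residual Killing-form coupling $[\cdot,\cdot]$ between levels.

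The decomposition into a term proportional to $v$ (supported on $c_{ik}'-c_{il}'$, the edges sharing the vertex $i$ with $c_{ij}$) and a term proportional to an auxiliary $v'$ (supported on the six edges around the opposite vertices) reflects that the overlap of the $h$-regions of two left-turns is concentrated either at their common vertex $i$ or along the shared edges of the triangulation; the shift $v\mapsto v'$ comes from applying the Cartan matrix $[\cdot,\cdot]$ to $v$, so I would identify $v'=d^{-1}$ of the relevant nearest-neighbor combination, i.e. $v'$ is obtained from $v$ by the action of the tridiagonal Cartan pairing across adjacent levels $m\pm 1$. Concretely I expect $v'$ to be the image of $v$ under the endomorphism of $L$ induced by coupling consecutive levels, and I would verify this by evaluating $\Omega^2$ level-by-level and matching with $\omega(c_{kl}\otimes w,\cdot)=\iota(c_{kl},\cdot)[w,\cdot]$.

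\textbf{Main obstacle.} The serious difficulty is purely bookkeeping: keeping consistent track of signs and orientations. Three independent sign conventions interact — the orientation of the edges of the $(n-1)$-triangulation entering $\epsilon_{\alpha\beta}$, the orientation of the dual edges $c_{kl}'$ in $\mathcal D'$, and the sign ambiguity on edge points for $n$ even that was resolved by the ``less weighted coordinate'' rule. Establishing that all interior terms telescope to zero requires a careful check that neighboring weighted cells induce \emph{opposite} oriented contributions along their shared edge, and this is exactly the step where an error in orientation would silently break the cancellation. I would therefore organize the inspection by first treating the levels $m$ with $n-m$ odd (where the second sum in $h$ is empty and the telescoping is cleanest), and only afterwards handle the even case where the boundary point carries weight $1$, checking that this half-weight is precisely what compensates the doubled interior contribution at the midpoint. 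Since the whole argument parallels \cite[Lemma~7.2.1]{BFG}, I would lean on that template for the sign conventions and present the computation as a finite verification over the finitely many edge-types of a tetrahedron.
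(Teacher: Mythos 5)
Your opening move is the same as the paper's: use equation \eqref{g} to reduce the lemma to computing $\Omega^2\left(h(c_{ij}(m)),h(c_{xy}(m'))\right)$ for all edges $c_{xy}$ of the tetrahedron, and then read off $g\circ h(c_{ij}\otimes v)$ by duality. But from that point on your proposal only \emph{describes} the computation rather than performing it, and the part you defer is precisely the content of the lemma: which of the eleven pairings vanish (those with $c_{ji}$, $c_{kl}$, $c_{lk}$), which are equal to $\pm 2[v_m,v_{m'}]$ (those with $c_{ik}$, $c_{il}$), and why the remaining six ($c_{ki}, c_{kj}, c_{jk}, c_{jl}, c_{li}, c_{lj}$) all carry a \emph{common} value up to the stated sign pattern, so that they assemble into a single term $2(c_{ki}'-c_{kj}'+c_{jl}'-c_{jk}'+c_{lj}'-c_{li}')\otimes v'$. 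Your ``discrete Stokes/telescoping'' picture is a plausible way to get the one pairing the paper actually computes by hand, namely $\Omega^2\left(h(c_{ij}(m)),h(c_{ik}(m'))\right)=2[v_m,v_{m'}]$, but asserting that ``interior contributions cancel and only boundary terms survive'' restates the problem; it does not establish the coefficient pattern, and you explicitly flag the sign bookkeeping as the unsolved difficulty.

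The two ideas that make the paper's proof short, and which are missing from your plan, are: (1) symmetry under vertex permutations — the transposition $\sigma:ijkl\to ijlk$ reverses the orientation, hence changes $\Omega^2$ into $-\Omega^2$, while fixing $h(c_{ij}(m))$ and $h(c_{ji}(m))$ and exchanging $h(c_{ik}(m))$ with $h(c_{il}(m))$; this gives for free that the $c_{il}$-pairing is $-2[v_m,v_{m'}]$ and that the $c_{ji}$-pairing vanishes; and (2) the kernel relation $h(c_{ki}(m'))+h(c_{kj}(m'))+h(c_{kl}(m'))=v_k(m')\in\ker(\Omega^2)$, where $v_k(m')$ are the rescaling vectors of Section \ref{ss:affine}; combined with support disjointness (which kills the $c_{kl}$, $c_{lk}$ terms) and further permutations, this is what forces the six ``far'' pairings to be equal up to sign, yielding the existence of $v'$ without ever computing it. Your proposal instead guesses $v'$ as something like $d^{-1}$ applied to a nearest-neighbor combination — note $d^{-1}$ need not even preserve the lattice $L$ — and your geometric heuristics are off: the interaction of the strips supporting $h(c_{ij}(m))$ and $h(c_{ik}(m'))$ is concentrated along the mid-face seam where the two half-strips meet, not at the common vertex $i$, and the far-edge interactions are likewise not ``along shared edges''. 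A brute-force verification could in principle succeed, but as written your argument has a genuine gap exactly where the lemma lives; the symmetry and kernel tricks are not optional shortcuts here but the mechanism that pins down the sign pattern you would otherwise have to track by hand.
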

\begin{proof}
In view of equation \ref{g}, we need to compute $\Omega^2$ on the image of $h$.

The first computation is straightforward:
$$\Omega^2\left(h(c_{ij}(m)),h(c_{ik}(m'))\right)=2[v_m,v_{m'}].$$
This bracket equals $2$ if $m=m'$, $-1$ if $|m-m'|=1$, and $0$ in the other cases.

Consider now the permutation $\sigma$: $ijkl\to ijlk$. It reverses the orientation of 
the tetrahedron, thus changes $\Omega^2$ into $-\Omega^2$. And it fixes $h(c_{ij}(m)$ and $h(c_{ji}(m))$, while exchanging $h(c_{ik}(m))$ and $h(c_{il}(m))$.

We deduce that:
$$\Omega^2\left(h(c_{ij}(m)),h(c_{il}(m'))\right)=-2[v_m,v_{m'}],$$
$$\Omega^2\left(h(c_{ij}(m)),h(c_{ji}(m'))\right)=0.$$

Moreover, $h(c_{ij}(m))$ has non vanishing components only on the two faces containing $ij$. Thus:
$$\Omega^2\left(h(c_{ij}(m)),h(c_{kl}(m'))\right)=\Omega^2\left(h(c_{ij}(m)),h(c_{lk}(m'))\right)=0.$$

We also claim:
$$\Omega^2\left(h(c_{ij}(m)),h(c_{ki}(m'))\right)=-\Omega^2\left(h(c_{ij}(m)),h(c_{kj}(m'))\right)$$
$$\Omega^2\left(h(c_{ij}(m)),h(c_{jk}(m'))\right)=-\Omega^2\left(h(c_{ij}(m)),h(c_{jl}(m'))\right)$$
$$\Omega^2\left(h(c_{ij}(m)),h(c_{li}(m'))\right)=-\Omega^2\left(h(c_{ij}(m)),h(c_{lj}(m'))\right)$$
Indeed, let us prove the first assertion: we have 
$$h(c_{ki}(m'))+h(c_{kj}(m'))+h(c_{kl}(m'))=v_k(m'),$$
where $v_k(m')$ is a vector generating $\ker(\Omega^2)$ defined in section \ref{ss:affine}. 
Hence 
$$\Omega^2\left(h(c_{ij}(m)),h(c_{ki}(m'))+h(c_{kj}(m'))+h(c_{kl}(m'))\right)=0.$$
We have seen that the last vector of the sum is orthogonal to $h(c_{ij}(m))$. It yields the desired assertion.

We now claim that:
$$
\begin{matrix}\Omega^2\left(h(c_{ij}(m)),h(c_{ki}(m'))\right)& =&\Omega^2\left(h(c_{ij}(m)),h(c_{jl}(m'))\right)\\ &=&\Omega^2\left(h(c_{ij}(m)),h(c_{lj}(m'))\right)
\\&=&-\Omega^2\left(h(c_{ij}(m)),h(c_{kj}(m'))\right) \\&=&-\Omega^2\left(h(c_{ij}(m)),h(c_{jk}(m'))\right) \\&=&-\Omega^2\left(h(c_{ij}(m)),h(c_{li}(m'))\right).
\end{matrix}
$$  
Indeed, using the permutation $\sigma$, we have:
$$\Omega^2\left(h(c_{ij}(m)),h(c_{ki}(m'))\right)=-\Omega^2\left(h(c_{ij}(m)),h(c_{li}(m'))\right).$$
From the previous claim, we get:
$$\Omega^2\left(h(c_{ij}(m)),h(c_{ki}(m'))\right)=\Omega^2\left(h(c_{ij}(m)),h(c_{lj}(m'))\right).$$
We may also write (still with the previous claim:
$$\Omega^2\left(h(c_{ij}(m)),h(c_{jl}(m'))\right)=-\Omega^2\left(h(c_{il}(m)),h(c_{jl}(m'))\right).$$
Applying now the transposition $(jl)$, we get the desired:
$$\Omega^2\left(h(c_{ij}(m)),h(c_{jl}(m'))\right)=\Omega^2\left(h(c_{ij}(m)),h(c_{lj}(m'))\right).$$
This proves the last claim. And the lemma follows, using equation \eqref{g}.
\end{proof}

Let us now glue the tetrahedra: consider a cycle $c=\sum_\mu c_{ij}^\mu$. We let the index $\mu$ be implicit inthe following formulas and compute:
\begin{multline*}
 g\circ h\left( c\otimes v\right)=\left(2\sum_\mu c_{ik}' - c_{il}'\right)\otimes   v+\\ \left(2 \sum_\mu c_{ki}' - c_{kj}' + c_{jl}' - c_{jk} ' + c_{lj}' - c_{li}'\right)\otimes v'
\end{multline*}
The miracle already observed by Neumann \cite[Lemma 4.3]{Neumann} and \cite[Section 8.1]{BFG} happens once again: we are now reduced to a problem in the homology of $\partial M$ and the lattice $L$ does not play any role here. 

We borrow from the last mentioned reference the following lemma \cite[Lemma 8.1.1]{BFG}, which concludes the proof of the theorem (in the closed case):
\begin{lemma}
\begin{itemize}
  \item The path $\sum_\mu c_{ik}' - c_{il}'$ is homologous to $2c$ in $H^1(\partial M)$,
  \item The path $ \sum_\mu c_{ki}' - c_{kj}' + c_{jl}' - c_{jk} ' + c_{lj}' - c_{li}'$ vanishes in $H^1(\partial M)$.
 \end{itemize}
\end{lemma}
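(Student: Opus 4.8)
The plan is to prove the statement as a purely homological fact on the cusp tori, after discarding the lattice: as already observed, the coefficient vectors $v,v'$ factor out, so it remains to locate the integral $1$-chains $\sum_\mu(c_{ik}'-c_{il}')$ and $\sum_\mu(c_{ki}'-c_{kj}'+c_{jl}'-c_{jk}'+c_{lj}'-c_{li}')$ in $H_1(\partial M)$. First I would fix a concrete model of each link $S$: its triangulation by the traces $t_{T_\mu}$ of the tetrahedra, whose corners are labelled by the edges of $T_\mu$ abutting the corresponding vertex and whose sides are labelled by the faces. After the first barycentric subdivision, $c_{ab}$ is the edge of $\mathcal D$ cutting off the corner $ab$ (the left turn around $ab$ near $a$) and $c_{ab}'$ is the transverse edge of $\mathcal D'$ crossing it once; I would draw all the edges occurring in the two bullets on the developed picture of the faces around $ij$, so that the remaining manipulations are visible.

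For the vanishing statement I would group the six dual edges by the corner they surround: $c_{ki}'-c_{kj}'$ near $k$, $c_{jl}'-c_{jk}'$ near $j$, $c_{lj}'-c_{li}'$ near $l$. Each group is a difference of two consecutive edges of $\mathcal D'$ issued from the barycenter of a single trace triangle, and the engine is the dual counterpart of the kernel relation $h(c_{ki})+h(c_{kj})+h(c_{kl})=v_k$ used above: the three dual edges encircling one triangle sum to its boundary. Using this together with the identifications imposed by the face pairings $\Phi$ --- the edge of $\mathcal D'$ produced on one side of a glued face is minus the one produced on the other side --- the alternating sum telescopes, consecutive contributions cancelling in pairs, so that $\sum_\mu(c_{ki}'-c_{kj}'+c_{jl}'-c_{jk}'+c_{lj}'-c_{li}')$ is a boundary and vanishes in $H_1(\partial M)$.

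For the first bullet I would exhibit a $2$-chain on $S$ cobounding $\sum_\mu(c_{ik}'-c_{il}')$ and $2c=2\sum_\mu c_{ij}^\mu$. The dual edges $c_{ik}'$ and $c_{il}'$ sit near the two corners of $t_{T_\mu}$ distinct from the turning corner $ij$; read along the loop they trace a path in $\mathcal D'$ running parallel to $c$. The factor $2$ is topological rather than inherited from $h$: each primal turn $c_{ij}^\mu$ is flanked by one transverse dual edge coming from the incoming triangle and one from the outgoing triangle, so the parallel dual path closes up into \emph{two} copies of $c$. Concretely I would realise the annular band between $c$ and this dual path as a sum of barycentric $2$-cells whose boundary is exactly $\sum_\mu(c_{ik}'-c_{il}')-2c$, which yields the asserted homology.

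The genuine obstacle is not any individual homotopy but the global bookkeeping underlying all three steps: fixing, with coherent orientations, precisely which edge of $\mathcal D'$ each symbol $c_{ab}'$ denotes under the pairings $\Phi$, and checking that both the telescoping and the band-cobounding respect these orientations all the way around each torus. This is exactly the lengthy inspection encapsulated in \cite[Lemma 8.1.1]{BFG}; the conceptual content --- Neumann's miracle that the computation collapses to $H_1(\partial M)$ and forgets the lattice $L$ --- has already been isolated, and what is left is to carry the signs faithfully through the combinatorics of the cusp triangulation.
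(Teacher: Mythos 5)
Your overall plan coincides with the paper's treatment of this lemma, for the simple reason that the paper contains no proof of it at all: after the reduction you also perform (the ``miracle'' that the lattice $L$ factors out and only the homology of curves on $\partial M$ matters), the paper's entire argument is the sentence borrowing the statement verbatim from \cite[Lemma 8.1.1]{BFG}. So sketching the cusp-surface inspection and then deferring the sign bookkeeping to that same reference, as you do, is exactly what the author does, and your two mechanisms (telescoping for the second bullet, a cobounding band producing the factor $2$ for the first) are a reasonable reconstruction of what the cited inspection actually carries out.

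One step of your sketch, however, would fail if executed literally: your stated ``engine'' for the telescoping. By your own description the dual edges $c'_{ki}$, $c'_{kj}$, $c'_{kl}$ all emanate from the barycenter of a single trace triangle, so their sum is a tripod --- it is not even a $1$-cycle, hence it cannot be the boundary of anything; there is no dual counterpart of the primal relation $c_{ki}+c_{kj}+c_{kl}=\partial(\text{medial triangle})$ of this form. Likewise, the dual edges produced on the two sides of a glued face are two distinct edges of $\mathcal{D}'$ (joining two different barycenters to the same corner), not negatives of each other at the chain level. The relations genuinely available in $C_1(\mathcal{D}')$ are the boundaries of the $2$-cells of $\mathcal{D}'$, which are the quadrilaterals dual to the vertices of $\mathcal{D}$, i.e.\ to the midpoints of the glued sides: if a glued side has endpoints the corners $A$ and $B$, with adjacent barycenters $b_1$ and $b_2$, the quadrilateral boundary says that the corner-to-corner path $A\to b_1\to B$ is homologous to $A\to b_2\to B$. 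It is these quadrilateral relations (and their unions around the vertices of the link, i.e.\ around the ends of the edges of the complex $K$) that make your telescoping cancel and your parallel dual path close up; with that substitution your outline matches the inspection performed in \cite[Lemma 8.1.1]{BFG}.
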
 

\section{Extension to the general case}

We refer to \cite[Section 9]{BFG} to the extension to the general case. 
Indeed, we have choosen notations coherent with the ones used in \cite{BFG}, 
and the section 9 may be transposed almost \emph{verbatim}. There are only two 
points to adapt: our $\Z^I$ is denoted there $C_1^{\textrm{or}}+C_2$ and in 
the lemma 9.2.2, the computed dimension should be replaced with:
$$\textrm{dim}(\mathcal H(J))=2(n-1)\nu_t+(n-1)\nu_a+\textrm{dim}(J_\Sigma).$$

\section{Local rigidity}

Let us mention that the generalization done in this paper allows also to 
generalize the result of \cite{BFGKR}. This gives a combinatorial proof for all cases of the theorem of Menal-Ferrer and Porti \cite{MenalFerrer-Porti}. Let us state this theorem.

Consider a compact orientable $3$-manifold $M$ with boundary a union of
$\ell$ tori. Let us assume that the interior of $M$ carries a hyperbolic
metric of finite volume and let $\rho_{} : \pi_1 (M) \rightarrow
\mathrm{PGL}(n, \C)$ be the corresponding holonomy representation composed with the
$n$-dimensional irreducible representation of $\mathrm{PGL} (2 , \C)$ (this 
representation is usually called geometric). Denote by $\mathcal R(M,\mathcal 
T)$ the sub-variety of $\C^\times \otimes (J^*\cap\ker (F^*))$ defined by 
asking that the $z$-coordinates on each tetrahedron $T\in\mathcal T$ belong to the langrangian 
manifold $\mathcal L_T$. The variety $\mathcal R(M,\mathcal T)$ comes with a 
natural projection (given by computing the holonomy):
$$\mathcal R(M,\mathcal T)\to \chi_n(M).$$
This projection gives local charts on $\chi_n(M)$.

\begin{theorem} \label{T1}
The class $[\rho_{}]$ of $\rho$ in $\chi_n(M)$ is a smooth point with local dimension $(n-1)\ell$.

Moreover, for an arbitrary choice of a non-trivial curve $c_i$ in each torus boundary ($1\leq i\leq \ell$), the map:
$$\begin{matrix} \mathcal R(M,\mathcal T)&\to&(\C^\times)^{(n-1)\ell}\\
z&\mapsto &(\Hol_\periph(c_i\otimes v_m))_{1\leq i\leq \ell\textrm{ and }1\leq m\leq n-1}\end{matrix}$$
restricts to a local isomorphism in a neighborhood of a point $z$ lifting $[\rho]$.
\end{theorem}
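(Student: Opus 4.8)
The plan is to linearize at a lift $z_0 \in \mathcal{R}(M,\mathcal{T})$ of $[\rho]$ and read off the tangent space through the symplectic machinery of the previous sections. Write $V_M = (J^*\cap\ker(F^*))\otimes\C$ for the tangent space to $\C^\times\otimes(J^*\cap\ker(F^*))$ at $z_0$. The lemma linearizing the face and edge equations identifies $V_M$ with the linearized gluing conditions, and equation \eqref{eq:ortho} shows that $V_M$ is coisotropic for $\Omega^*$, with radical $V_M^{\perp_{\Omega^*}} = \mathrm{Im}(G^*)\otimes\C$ and symplectic reduction $\mathcal{H}(J^*)\otimes\C$. The Lagrangian constraints $\mathcal{L}_T$ on each tetrahedron cut out $\mathcal{R}$, so $T_{z_0}\mathcal{R} = \mathcal{L}_{z_0}\cap V_M$, where $\mathcal{L}_{z_0} = \bigoplus_{T}T_{z_0}\mathcal{L}_T$ is a Lagrangian subspace of $(J^*\otimes\C,\Omega^*)$ --- Lagrangian because each $\mathcal{L}_T$ is Lagrangian in $J_T^*$ and the $J_T^*$ sit $\Omega^*$-orthogonally in $J^*$.

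First I would establish that the boundary holonomy has Lagrangian image on $\mathcal{R}$. By the standard linear symplectic reduction fact --- the image of a Lagrangian under the projection from a coisotropic subspace to its symplectic reduction is again Lagrangian --- the image $\bar\Lambda$ of $T_{z_0}\mathcal{R}$ in $\mathcal{H}(J^*)\otimes\C$ is Lagrangian, hence of dimension $(n-1)\ell$, half of $\dim_\C\mathcal{H}(J^*)\otimes\C = 2(n-1)\ell$ (all links being tori, by the dimension computation for $\mathcal{H}(J)$). By Corollary \ref{corkab} together with Corollary \ref{corhom}, the map $\Hol_\periph$ descends to a symplectic isomorphism $\mathcal{H}(J^*)\otimes\C \xrightarrow{\sim} H^1(\partial M,\C^{n-1})$, so $\Lambda := d\Hol_\periph(T_{z_0}\mathcal{R})$ is a Lagrangian of dimension $(n-1)\ell$ inside the boundary symplectic space.

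To pass from this to smoothness and the stated local dimension, the key point --- and what I expect to be the main obstacle --- is to show that $\Hol_\periph$ is injective on $T_{z_0}\mathcal{R}$ modulo the framing ambiguity, i.e. that the projection $\mathcal{R}\to\chi_n(M)$ collapses exactly $\mathrm{Im}(G^*)$ and that the restriction map on deformations is injective. This cannot follow from the combinatorics alone, since it is false for a general solution of the gluing equations: it is precisely where the geometry of the geometric representation enters. I would deduce it from the principal-$\SL(2,\C)$ structure of $\rho$, decomposing the adjoint $\mathfrak{sl}(n,\C)$ under the principal subgroup as $\bigoplus_{k=1}^{n-1}\mathrm{Sym}^{2k}$ and reducing the vanishing of the interior part of $H^1$ to the classical $n=2$ statement on each summand --- the combinatorial incarnation, via the symplectic isomorphism, of the cohomology vanishing of Raghunathan and Menal-Ferrer--Porti. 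Granting this, $\bar\Lambda$ identifies with $T_{[\rho]}\chi_n(M)$, which is therefore a smooth point of dimension $(n-1)\ell$.

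Finally, for the local isomorphism onto the chosen curves, I would observe that recording $(\Hol_\periph(c_i\otimes v_m))$ for one non-trivial curve $c_i$ per torus and all $1\leq m\leq n-1$ amounts to projecting onto a Lagrangian subspace $A\subset H^1(\partial M,\C^{n-1})$: the corresponding coordinate functions Poisson-commute because the self-intersection $\iota(c_i,c_i)=0$ and distinct tori are orthogonal for $\mathrm{wp}$ (Theorem \ref{th:main}). The projection $\Lambda\to A$ is then a linear isomorphism if and only if $\Lambda$ is transverse to the complementary Lagrangian spanned by the dual curves, and I would verify this transversality from the cusp geometry: for the geometric representation the coupling between $c_i$ and its dual curve is nondegenerate, generalizing the fact that the cusp shape satisfies $\tau\notin\R$, which for $n=2$ makes every non-trivial slope a local coordinate. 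Hence no nonzero deformation in $\Lambda$ fixes all the chosen eigenvalue data, and the inverse function theorem upgrades the isomorphism of tangent spaces to the asserted local isomorphism.
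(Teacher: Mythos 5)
Your symplectic set-up (the coisotropic subspace $\ker((F')^*)$, its reduction $\mathcal H(J^*)$, the fact that the image of the Lagrangian $\bigoplus_T T_{z_0}\mathcal L_T$ in the reduction is again Lagrangian, and the identification of the reduction with $H^1(\partial M,\C^{n-1})$ via Corollaries \ref{corhom} and \ref{corkab}) is sound and consistent with the paper's machinery. But at the step you yourself flag as the main obstacle --- injectivity of $d\Hol_\periph$ on $T_{z_0}\mathcal R$ modulo the framing directions, equivalently transversality of $\prod_T\mathcal L_T$ with $\mathrm{im}(p\circ F)=\ker(d\Hol_\periph)$ --- your proposal is not a proof. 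You propose to deduce it from the decomposition of $\mathfrak{sl}(n,\C)$ under the principal $\SL(2,\C)$ together with the cohomology vanishing of Raghunathan and Menal-Ferrer--Porti. That is circular relative to the statement: Theorem \ref{T1} \emph{is} the theorem of Menal-Ferrer and Porti, and the whole point of the paper is to give a combinatorial proof of it; the vanishing you invoke is exactly the content you are supposed to establish. Moreover, even if one were willing to cite it, the bridge is missing: nothing in the paper identifies the combinatorial tangent space $T_{z_0}\mathcal R\subset J^*\otimes\C$ with the group cohomology $H^1(M,\mathfrak{sl}(n,\C)_{\mathrm{Ad}\,\rho})$; the symplectic isomorphism only concerns boundary data, so ``the combinatorial incarnation of the cohomology vanishing'' is a substantial construction that your sketch does not supply.

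The paper's geometric input is entirely different and much more elementary, staying inside the combinatorial framework: at the geometric representation one may choose the decoration so that every tetrahedron is honestly hyperbolic, and for a tangent vector $\xi$ to the locus of hyperbolic tetrahedra the relation of \cite[Section 7.2.1]{DGG} gives $\Omega^*(\xi,\bar\xi)=\frac{1}{6}n(n^2-1)\,\Omega_{\mathrm{NZ}}(\xi,\bar\xi)$, where $\Omega_{\mathrm{NZ}}$ is the classical Neumann-Zagier form; Choi's positivity $\Omega_{\mathrm{NZ}}(\xi,\bar\xi)>0$ then produces a tangent vector not annihilated by $\Omega^*$ against its conjugate. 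Since $\mathrm{im}(p\circ F)$ lies in the radical of $\Omega^*$ restricted to $\ker(F^*)$, this single inequality suffices (by \cite[Lemma 6.3]{BFGKR}) to force the transversality, and the dimension count of \cite[Section 6.4]{BFGKR} then yields \emph{both} conclusions of the theorem at once --- smoothness with dimension $(n-1)\ell$ \emph{and} the fact that an arbitrary choice of one nontrivial curve per torus gives local coordinates. In particular your separate final paragraph, whose key claim that ``the coupling between $c_i$ and its dual curve is nondegenerate'' is again asserted rather than proven (and for $n>2$ is exactly as hard as the main point), is subsumed by this one argument; as written it leaves a second genuine gap.
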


\begin{proof}
The proof of the theorem is exactly the same as in \cite{BFGKR}. It is enough 
(for dimensions reasons, see \cite[Section 6.4]{BFGKR})  to prove that the subvariety $\prod_{T\in \mathcal T}
\mathcal L_T\subset J^*$ is transverse to $\textrm{im}(p\circ F)$. Indeed, the latter 
is the kernel of the differential of $\Hol_\periph$. But we note, as in \cite{BFGKR}, that it lies in the kernel of $\Omega^*$ restricted to $\ker(F^*)$. For $z$ a point in $\prod_{T\in \mathcal T}\mathcal L_T$, it is enough to find a tangent vector such that $\Omega^*(\xi,\bar\xi)\neq 0$ \cite[Lemma 6.3]{BFGKR}.

Now, fix a point $z\in \prod \mathcal L_T$ such that each tetrahedron is in 
fact hyperbolic (see \cite[Section 7.2.1]{DGG}). Through $z$ passes the (non 
trivial) subvariety of points in $\prod \mathcal L_T$ such that each 
tetrahedron is hyperbolic. Consider a tangent vector $\xi$ at $z$ 
which is also tangent to this subvariety. Then, from \cite[Section 7.2.1]{DGG}, one easily gets that 
$$\Omega^*(\xi,\bar \xi)=\frac 16n(n^2-1) \Omega_\textrm{NZ}(\xi,\bar \xi).$$
Here, $\Omega_{\textrm{NZ}}$ denotes the usual Neumann-Zagier form in the hyperbolic case. But, as observed by Choi \cite{Choi}, we have $\Omega_\textrm{NZ}(\xi,\bar \xi)>0$.
\end{proof}

\section{Computation of the peripheral holonomy}

We prove here lemma \ref{lem:Holperiph}:
Let $z \in k^{\times}\otimes (J^*\cap \mathrm{Ker}(F^*))$. Seeing $z$ as an element of $\mathrm{Hom} (J^2 , C^{\times})$,
we have:
$$z \circ h = \Hol_\periph(z)^2.$$

The proof is very similar to \cite[Lemma 7.2.1]{BFG}.

\begin{proof}
Consider a loop $c$ in the link of a vertex of the complex $K$, and write it as a cycle:
$$c=\sum c_{ij}^\mu - \sum c_{ij}^{\mu'}$$
(with the same notations as in section \ref{ss:proofhomologies}).
That is: $c$ turns left around the edges $e_{ij}^\mu$ and right around $e_{ij}^{\mu'}$.

Using the matrix $L$ and $R$ defined in section \ref{ss:holonomy}, we see that 
the diagonal part of the holonomy of $c$ is 
$$\textrm{Diag}(1,C_1,\ldots,\prod_1^{n-1}C_m),$$
where, for $1\leq m\leq n-1$:
$$C_m=\pm\frac{\prod z_{ij}{^\mu}(m)}{\prod Z_{ij}^{\mu'}(m)}.$$
In terms of the map $\Hol_\periph$, one writes:
 $$C_m=\Hol_\periph(z)(c\otimes v_m).$$

We may simplify the formula for $C_m$, as in the proof of 
\cite[Lemma 7.2.1]{BFG}. Thanks to the face equations, if $c$ turns right in 
two following tetrahedra $T_{\mu'}$ and $T_{\eta'}$, then in the product $Z_{ij}^{\mu'}(m)Z_{ij}(m)^{\eta'}$, all the coordinates $z_\alpha$ corresponding to 
the internal face cancel. So in the formula for $C_m$, at the end, it only 
appears face coordinates for the faces at which $c$ change direction.

Let $\mathcal F$ be the set of such faces (seen as a face of the tetrahedra in 
which $c$ turns right). Denote by $z_{\mathcal F}(m)$ the product of every 
$z_{\alpha}$ for $\alpha$ a point in the interior of the face $\mathcal F=ijk$ 
at 
the level $m$ from the base $jk$. We may then rewrite:
$$C_m=\pm\frac{\prod z_{ij}^\mu(m)}{\prod z_{ij}^{\mu'}(m)\prod_{\mathcal F}z_
{\mathcal F}}.$$

Now, what about $z\circ h(c\otimes v_m)$ ? From the definition of $h$ -- see 
equation \eqref{eq:maph}--, with the same notations, we get:
$$z\circ h(c\otimes v_m)=\frac{\prod_{2l<n-m} (z^\mu_{\alpha^m_l})^2+\prod_{2l=\pm (n-m)}z^\mu_{\alpha^m_l}}{\prod_{2l<n-m} (z^{\mu'}_{\alpha^m_l})^2+\prod_{2l=\pm (n-m)}z^{\mu'}_{\alpha^m_l}}.$$
As before, all the face coordinates outside of the faces in $\mathcal F$ simplify. And by inspecting the situation in faces of $\mathcal F$, we get:
$$z\circ h(c\otimes v_m)=\left(\frac{\prod z_{ij}^\mu}{\prod z_{ij}^{\mu'}\prod_{\mathcal F}z_\mathcal F}\right)^2.$$

This proves the lemma.

\end{proof}

\bibliographystyle{amsalpha}
\bibliography{bibli}

\end{document}